\newtheorem{theorem}{Theorem}[section]
\newtheorem{example}{Example}[section]
\newtheorem{lemma}{Lemma}[section]
\newtheorem{proposition}{Proposition}[section]
\newtheorem{assumption}{Assumption}[section]
\newenvironment{proof}[1][Proof]{\textbf{#1.} }{\ \rule{0.5em}{0.5em} \vspace{1ex}}
\def\real{\mathbb{R}}
\DeclareMathOperator{\argmin}{argmin}
\DeclareMathOperator{\dist}{dist}
\title{Convergence and Complexity Analysis of a Levenberg-Marquardt Algorithm for Inverse Problems
}
\author{
E. Bergou \thanks{MaIAGE, INRAE, Universit\'e Paris-Saclay, 78350 Jouy-en-Josas, France
 ({\tt elhoucine.bergou@inra.fr}).
}
\and
Y. Diouane\thanks{ISAE-SUPAERO, Universit\'e de Toulouse, 31055 Toulouse Cedex 4, France
 ({\tt youssef.diouane@isae-supaero.fr}).
}
\and
V. Kungurtsev \thanks{Department of Computer Science, Faculty of Electrical Engineering, Czech Technical University in Prague ({\tt vyacheslav.kungurtsev@fel.cvut.cz}).}
}
\begin{document}
\maketitle

%
\footnotesep=0.4cm
{\small
\begin{abstract}
The Levenberg-Marquardt algorithm is one of the most popular algorithms for finding the solution of nonlinear least squares problems. 
Across different modified variations of the basic procedure, the algorithm enjoys global convergence, a competitive worst case iteration complexity
rate, and a guaranteed rate of local convergence for both zero and nonzero small residual problems, under suitable assumptions. We introduce a novel Levenberg-Marquardt method
that matches, simultaneously, the state of the art in all of these convergence properties with a single seamless algorithm. 
Numerical experiments confirm the theoretical behavior of our proposed algorithm.
\end{abstract}

\begin{center}
\textbf{Keywords:}
Inverse problems; Levenberg-Marquardt method; worst-case complexity bound; global and local convergence.
\end{center}
}

\section{Introduction}\label{sec:intro}
In this paper, we consider solving general nonlinear least squares problems where one may or may not have
a solution with a zero residual.
  Problems of this
nature arise in several important practical contexts, including inverse problems for ill-posed nonlinear 
continuous systems~\cite{ATarantola_2005} with applications
such as data assimilation~\cite{YTremolet_2007}.
 Usually the resulting 
least squares problems do not necessarily have a zero residual at any point, although the minimum residual may be small.

Recall that the Gauss-Newton method is an iterative procedure for solving nonlinear least squares problems by iteratively solving 
a linearized least squares subproblem. This subproblem may not be well-posed in the case of rank deficiency of the residual Jacobian function.
Furthermore, the Gauss-Newton method may not be globally convergent. The Levenberg-Marquardt (LM) method~\cite{KLevenberg_1944,DMarquardt_1963,MROsborne_1976} was developed to
deal with the rank deficiency of the Jacobian matrix   
and also to provide a globalization strategy for Gauss-Newton.
In this paper, we will present and analyze the global and local convergence results of a novel LM method for solving general nonlinear least squares problems, that carefully balances the opposing objectives of ensuring global convergence and stabilizing a fast local convergence regime.

In general, the goals of encouraging global and local convergence compete against each other. Namely, the regularization
parameter appearing in the subproblem should be allowed to become arbitrarily large in order to encourage global convergence,
by ensuring the local accuracy of the linearized subproblem, but the parameter must approach zero in order to function
as a stabilizing regularization that encourages fast local convergence. In the original presentation of the LM method 
in~\cite{KLevenberg_1944,DMarquardt_1963}, the regularization parameter is not permitted
to go to zero, and only global convergence is considered.

The strongest results for local convergence of LM are given in a series of papers beginning with~\cite{yamashita2001rate}
(followed by, e.g.,~\cite{fan2005quadratic} and~\cite{dan2002convergence}; see also~\cite{facchinei2013family}), 
wherein it is assumed that the residual function is zero at the solution. For the global convergence, the algorithm considered is a two-phase one, where
quadratic decline in the residual is tested with each step that is otherwise globalized by a line-search procedure. 

In the case of nonzero residuals, it has been found that a LM method converges locally at a linear rate, 
if the norm of the residual is sufficiently small and the 
regularization parameter goes to zero \cite{ipsen2011rank}.  
Our proof of linear convergence is simpler than in~\cite{ipsen2011rank}.
Worst-case iteration complexity bounds for LM methods applied to nonlinear least squares problems 
can be found in~\cite{KUeda_NYamashita_2010,KUeda_NYamashita_2012,RZhao_JFan_2016}. We show that our proposed algorithm has a complexity bound that matches these results, up to a logarithmic factor.

In this paper, we propose a method that successfully balances the multiple objectives in theoretical convergence properties, including (a) global convergence
for exact and inexact solutions of the subproblem, (b) worst case iteration complexity, and (c) local convergence for both zero and nonzero residual
problems. Table~\ref{table:lit} summarizes the literature on this class of methods; our proposed algorithm uniquely matches the state of the
art in all of these properties. 
\begin{table}[!h]
\centering
\caption{Convergence Properties of Levenberg-Marquardt Algorithms.}
\label{table:lit}
\begin{tabular}{|l|c|c|c|c|c|}
\hline
& \cite{KLevenberg_1944,DMarquardt_1963,MROsborne_1976} & \cite{KUeda_NYamashita_2010,KUeda_NYamashita_2012,RZhao_JFan_2016} 
& \cite{yamashita2001rate,fan2005quadratic,facchinei2013family} & \cite{ipsen2011rank} & \textbf{This work} \\ \hline
Global convergence & Yes & Yes & Two-phase or No & Yes & Yes \\ \hline
Complexity analysis& No & Yes & No & No & Yes \\ \hline
Local zero residual & No & No & Quadratic & Superlinear & Quadratic \\ \hline
Local nonzero  & No & No & No & Linear & Linear \\ 
 small residual &  &  &  & & \\ \hline
\end{tabular}
\end{table}

The outline of this paper is as follows. In Section \ref{sec:alg}, we present the proposed LM algorithm and address the inexact solution of the
linearized least squares subproblem. 
Section~\ref{sec:wcc} contains a worst-case complexity and global convergence
analysis of the proposed method. In Section \ref{sec:local},  we derive the local convergence theory. In Section \ref{sec:num}, preliminary numerical experiments are presented to
demonstrate the behavior of our algorithm. Finally, we conclude in Section \ref{sec:conclusion}.

\section{A Novel Levenberg-Marquardt Algorithm}\label{sec:alg}
In this paper, we consider the following nonlinear least squares problem
\begin{equation} \label{function-f}
 \min_{x \in \real^n} \; f(x) := \frac{1}{2}\|F(x)\|^2,
\end{equation}
where $F:\real^n \rightarrow \real^m$ is a (deterministic) vector-valued function,
assumed twice continuously differentiable. Here and in the rest of the text, $\| \cdot \|$ denotes the vector or matrix $l_2$-norm.
At each iteration $j$, the LM method computes (approximately) a step $s_j$ of the form
$- ( J_j^\top J_j + \gamma_j I )^{-1} J_j^\top F_j$,
corresponding to the unique solution of
\begin{equation}
\label{eq:LMsubproblem}
\min_{s \in \real^n} \; m_j(s) := \frac{1}{2}\|F_j + J_j s\|^2 + \frac{1}{2}\gamma_j \|s\|^2,
\end{equation}
where $\gamma_j>0$ is an appropriately chosen regularization parameter, $F_j:=F(x_j)$ and $J_j :=J(x_j)$ denotes the Jacobian of $F$ at $x_j$.

In deciding whether to accept a step $s_j$ generated by the subproblem~\eqref{eq:LMsubproblem}, the LM method can be seen as precursor of the trust-region method~\cite{Conn_Gould_Toin_2000}. In fact, it seeks to determine when the Gauss-Newton step is applicable or when it should be replaced by a slower but safer steepest descent step. 
 One considers the ratio $\rho_j$ between the actual reduction  $f(x_{j}) - f(x_j+s_j)$ attained in the
objective function and the reduction $m_j(0) - m_j(s_j)$ predicted by the
model.
Then, if $\rho_j$ is sufficiently greater than zero, the step is accepted
and~$\gamma_j$ is possibly decreased. Otherwise the step is
rejected and $\gamma_j$ is increased.

In this paper, we use the standard choice of the  regularization parameter
$$\gamma_j := \mu_j \|F(x_j)\|^2,$$ 
where $\mu_j$ is updated according to the ratio $\rho_j$.  
The considered LM algorithm using the above update strategy, as described in Algorithm \ref{alg:LM}, will be shown to be globally convergent with a  complexity bound of order $\epsilon^{-2}$  
 and have strong local convergence properties.
 
\noindent
\begin{algorithm}[ht]
\DontPrintSemicolon
\SetAlgoNlRelativeSize{0}
\caption{\bf A Levenberg-Marquardt algorithm.}
\label{alg:LM}
\begin{rm}
\begin{description}
\item[Initialization] \ \\
Choose the constants $\eta \in ]0,1[,~\mu_{\min}>0$ and $\lambda>1$.
Select $x_0$ and $\mu_0 \geq \mu_{\min}$. Set $\gamma_0 = \mu_0 \|F(x_0)\|^2$ and $\bar \mu =\mu_0$. 
\item[For $j=0,1,2,\ldots$] \ \\
\begin{enumerate}
\item Solve (or approximately solve) (\ref{eq:LMsubproblem}), and let $s_j$ denote such a solution.
\item Compute $\rho_j := \frac{f(x_{j}) - f(x_j+s_j)}{m_j(0) - m_j(s_j)}$.
\item If $\rho_j \ge \eta$, then set $x_{j+1}=x_j+s_j$ and
 $ \mu_{j+1} \in [\max\{\mu_{\min},\bar\mu/\lambda\}, \bar\mu]$ and $\bar \mu = \mu_{j}$.

Otherwise, set $x_{j+1}=x_j$ and $\mu_{j+1}=\lambda\mu_j.$
\item Compute $\gamma_{j+1} = \mu_{j+1}\|F(x_{j+1})\|^2$.
\end{enumerate}
\end{description}
\end{rm}
\end{algorithm}

This Algorithm has one particularly novel feature among LM methods: 
we have an auxiliary parameter
$\bar \mu$ which represents the last parameter corresponding to a successful step, introduced to balance the requirements
of global and local convergence. If the model is inaccurate, then $\mu_j$ is driven higher,
however, when we reach a region associated with the local convergence regime, the 
residual $\|F(x_j)\|$ should ultimately dominate the behavior
of $\gamma_j$ for successful steps. 
Step~1 of Algorithm~\ref{alg:LM} requires the approximate solution of
subproblem (\ref{eq:LMsubproblem}). As in trust-region methods,
there are different techniques to approximate the solution of this subproblem
that yield a globally convergent step.
For that it suffices to compute
a step $s_j$ that provides a reduction in the model at least as good as the one
produced by the so-called Cauchy step (defined as the minimizer of the model
along the negative gradient) which is given by
$$s_j^{\mbox{c}}\; := \; -\frac{\|\nabla f(x_j)\|^2}{\nabla f(x_j)^\top(J_{j}^\top J_{j}+\gamma_j I) \nabla f(x_j)} \nabla f(x_j).$$
The Cauchy step is cheap to calculate as it does not require any system solve.
Moreover, the LM method will be globally convergent if it uses
a step that attains a reduction in the model as good as a multiple of the Cauchy decrease.
Thus we will impose the following assumption on the step calculation:
\begin{assumption}
\label{ass:cauchy_decrease}
There exists $\theta_{fcd}>0$ such that for every iteration $j$:
$$m_j(0) - m_j(s_j)
\; \geq \; \frac{\theta_{fcd}}{2}\frac{\|\nabla f(x_j)\|^2}{\|J_{j}\|^2+\gamma_j}.$$ 
\end{assumption}
Despite providing a sufficient reduction in the model and being cheap to compute,
the Cauchy step is scaled steepest descent. In practice, a version of Algorithm~\ref{alg:LM} based solely on the  Cauchy
step would suffer from the same drawbacks as the steepest descent algorithm on ill-conditioned problems.

Since the Cauchy step is the first step of the
conjugate gradient method (CG) when applied to the minimization of
the quadratic $s \rightarrow m_j(s)$, it is natural to propose running CG
further and stopping only when the residual becomes sufficiently small. 
Since the CG generates iterates by minimizing the quadratic model over nested Krylov subspaces,
and the first subspace is the one generated by $\nabla f(x_j)$
(see, e.g.,~\cite[Theorem~5.2]{JNocedal_SJWright_2006}),
the decrease obtained at the first CG iteration (i.e., by the Cauchy step)
is at least attained by the remaining iterations. Thus Assumption~\ref{ass:cauchy_decrease} holds for all the iterates $ s_j^{\mbox{cg}}$ generated by the truncated-CG whenever it is initialized by the null vector.  The following lemma is similar to~\cite[Lemma 5.1]{EBergou_SGratton_LNVicente_2016} and will be useful  for our worst-case complexity analysis.
\begin{lemma} \label{lem:born_s}
For the three steps proposed (exact, Cauchy, and truncated-CG), one has that
\begin{eqnarray*}
\|s_j\| \le  \frac{\|\nabla f(x_j)\|}{\gamma_j} & ~~\mbox{and}~~ & | s_j^\top ( \gamma_j s_j + \nabla f(x_j) ) |  \le 
\frac{ \| J_{j}\|^2 \|\nabla f(x_j)\|^2 }{\gamma_j^{2}}.
\end{eqnarray*}
\end{lemma}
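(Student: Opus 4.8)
The plan is to isolate a single algebraic identity shared by all three step variants and then read off both inequalities from it, using only the positive definiteness of the regularized Gauss--Newton matrix. Throughout, write $g := \nabla f(x_j) = J_j^\top F_j$ and $H_j := J_j^\top J_j + \gamma_j I$, so that the model gradient is $\nabla m_j(s) = g + H_j s$, and observe that $H_j \succeq \gamma_j I$ because $J_j^\top J_j$ is positive semidefinite, whence $s^\top H_j s \ge \gamma_j \|s\|^2$ for all $s$. The central observation I would establish is the orthogonality relation
$$s_j^\top ( H_j s_j + g ) = 0$$
for each admissible step. For the exact solution this is immediate, since $H_j s_j + g = \nabla m_j(s_j) = 0$. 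For the Cauchy step $s_j^{\mathrm{c}} = -(\|g\|^2/(g^\top H_j g))\,g =: \alpha g$, a one-line computation gives $\alpha\, g^\top H_j g + \|g\|^2 = 0$, hence $s_j^{\mathrm{c}\top}(H_j s_j^{\mathrm{c}} + g) = \alpha(\alpha\, g^\top H_j g + \|g\|^2) = 0$. For the truncated-CG step initialized at the null vector, this is precisely the standard conjugate-gradient property: the iterate lies in the Krylov subspace $\mathcal{K}_k = \mathrm{span}\{g, H_j g, \dots, H_j^{\,k-1} g\}$, while the residual $-(H_j s_j + g)$ is orthogonal to $\mathcal{K}_k$, so $s_j$ is orthogonal to $H_j s_j + g$.

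Given this relation, the first bound follows quickly. Rearranging yields $s_j^\top H_j s_j = -\,s_j^\top g$; bounding the left-hand side below by $\gamma_j \|s_j\|^2$ and the right-hand side above by $\|s_j\|\,\|g\|$ via Cauchy--Schwarz gives $\gamma_j \|s_j\|^2 \le \|s_j\|\,\|g\|$. Dividing by $\gamma_j \|s_j\|$ (the case $s_j = 0$ being trivial) produces $\|s_j\| \le \|g\|/\gamma_j$.

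For the second bound I would split off the $J_j^\top J_j$ contribution. Since $\gamma_j s_j = H_j s_j - J_j^\top J_j s_j$, we have $\gamma_j s_j + g = (H_j s_j + g) - J_j^\top J_j s_j$, so the orthogonality relation gives $s_j^\top(\gamma_j s_j + g) = s_j^\top(H_j s_j + g) - \|J_j s_j\|^2 = -\|J_j s_j\|^2$. Taking absolute values and applying submultiplicativity of the norm together with the first bound yields $|s_j^\top(\gamma_j s_j + g)| = \|J_j s_j\|^2 \le \|J_j\|^2 \|s_j\|^2 \le \|J_j\|^2 \|g\|^2 / \gamma_j^2$, which is the claimed estimate. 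The only delicate point is the uniform verification of the orthogonality relation across the three step choices, and in particular its derivation for truncated-CG from the Krylov-subspace minimization property; once that identity is secured, both inequalities reduce to the elementary spectral bound $H_j \succeq \gamma_j I$ and Cauchy--Schwarz.
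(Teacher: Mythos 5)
Your proof is correct, and the paper itself omits the argument (deferring to Lemma 5.1 of Bergou, Gratton and Vicente), so your write-up is a valid self-contained replacement resting on the same two ingredients that reference uses: the spectral bound $J_j^\top J_j+\gamma_j I\succeq \gamma_j I$ and the optimality of each step. Your unification of the three cases through the single identity $s_j^\top\bigl((J_j^\top J_j+\gamma_j I)s_j+\nabla f(x_j)\bigr)=0$ (trivial for the exact step, a one-line computation for the Cauchy step, and the standard Krylov-subspace orthogonality for truncated CG initialized at zero) is exactly the right mechanism, and both stated bounds do follow from it as you show.
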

In what comes next, we will call  all iterations $j$ for which $\rho_j\ge \eta$ \emph{successful}, and we denote the set of their indices by the symbol $\mathcal{S}$, i.e., $\mathcal{S} := \{j \in \mathbb{N}  | ~~\rho_j  \ge \eta \}.$

\section{Worst-Case Iteration Complexity and Global Convergence}
\label{sec:wcc}
We now establish a worst-case complexity bound of Algorithm~\ref{alg:LM}. Namely, given a tolerance 
$\epsilon \in ]0,1[$, we aim at deriving the number of iterations, in the worst case, needed to reach an iterate 
$x_j$ such that
	\begin{equation}
	\label{eq:wcc:approxopt}
		\left\|\nabla f(x_j) \right\| < \epsilon \text{ or } \left\|F(x_j)\right\| < \max\left\{\epsilon,
(1+\epsilon)\| F(\bar x_j)\|\right\} 
	\end{equation}
where $\bar x_j \in \argmin_{\{x \in \real^n: \nabla f(x)=0\}} \|x_j - x \|$. 
Without loss of generality, we will assume that $\|F(\bar  x_j)\|$ is unique and independent from $x_j$, we will denote it by $\bar f$. It can be seen that, if we drop this assumption, then the same arguments in this section show asymptotic global convergence. Then, $\bar f := \sqrt{2f(\bar{x}_j)}=\|F(\bar{x}_j)\|$ can just be taken to be the value at the limit point of the sequence.
We start now by giving some classical assumptions.
\begin{assumption} \label{ass:f}
The function $f$ is continuously differentiable in an open set containing
$L(x_0) := \{ x \in \real^n: f(x) \leq f(x_0) \}$
with Lipschitz continuous gradient on $L(x_0)$ with the constant $\nu > 0$.
\end{assumption}
\begin{assumption} \label{ass:J} 
The Jacobian $J$ of $F$ is uniformly bounded, i.e., there exists $\kappa_{J} > 0$
such that $\|J \| \leq \kappa_{J}$ for all~$x$.
\end{assumption}	
We begin by obtaining a condition on the parameter $\mu_j$ that is sufficient for an iteration to be successful.
We omit the proof as it is standard, see for instance Lemma 5.2 in   \cite{EBergou_SGratton_LNVicente_2016}.
\begin{lemma}
\label{lemma:wcc:condmusuccess}
	Let Assumptions~\ref{ass:cauchy_decrease}, \ref{ass:f}, and \ref{ass:J} hold. 
	Suppose that at the $j$-th iteration of Algorithm~\ref{alg:LM}, one has
	\begin{equation}
	\label{eq:wcc:condmusuccess}
		\mu_j \; > \; \frac{\kappa}{\|F(x_j)\|^2}
	\end{equation}
	where
	\[
		\kappa :=  \frac{a+\sqrt{a^2+4a \kappa_J^2(1-\eta)}}{2(1-\eta)} \quad \mbox{and} \quad
		a := \frac{\tfrac{\nu}{2}+2\kappa_J^2}{\theta_{fcd}}.
	\]
	Then, the iteration is successful.
\end{lemma}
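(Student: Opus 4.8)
The plan is to show directly that the acceptance test $\rho_j \ge \eta$ is forced once $\gamma_j = \mu_j\|F(x_j)\|^2$ is large enough. First I would record the identity $m_j(0) = \tfrac12\|F_j\|^2 = f(x_j)$, so that
\[ 1 - \rho_j \;=\; \frac{\bigl(m_j(0)-m_j(s_j)\bigr) - \bigl(f(x_j)-f(x_j+s_j)\bigr)}{m_j(0)-m_j(s_j)} \;=\; \frac{f(x_j+s_j) - m_j(s_j)}{m_j(0)-m_j(s_j)}. \]
It therefore suffices to prove that the right-hand side is at most $1-\eta$. If $\nabla f(x_j)=0$ the iterate is already stationary and there is nothing to show, so assume $\nabla f(x_j)\neq 0$; the factor $\|\nabla f(x_j)\|^2$ will cancel at the end.

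Next I would bound the two pieces separately. For the denominator (the predicted decrease), Assumption~\ref{ass:cauchy_decrease} together with $\|J_j\|\le\kappa_J$ from Assumption~\ref{ass:J} gives
\[ m_j(0)-m_j(s_j) \;\ge\; \frac{\theta_{fcd}}{2}\,\frac{\|\nabla f(x_j)\|^2}{\kappa_J^2+\gamma_j}. \]
For the numerator (the model-versus-function discrepancy), I would expand $m_j(s_j) = f(x_j) + \nabla f(x_j)^\top s_j + \tfrac12 s_j^\top(J_j^\top J_j + \gamma_j I)s_j$, using $\nabla f(x_j)=J_j^\top F_j$, and then invoke the Lipschitz estimate $|f(x_j+s_j)-f(x_j)-\nabla f(x_j)^\top s_j|\le \tfrac\nu2\|s_j\|^2$ from Assumption~\ref{ass:f}. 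This leaves $f(x_j+s_j)-m_j(s_j)$ controlled by $\tfrac\nu2\|s_j\|^2$ together with the model curvature $\tfrac12 s_j^\top(J_j^\top J_j+\gamma_j I)s_j$; the norm bound $\|s_j\|\le\|\nabla f(x_j)\|/\gamma_j$ and the second inequality of Lemma~\ref{lem:born_s} (which converts the relevant cross term into a multiple of $\kappa_J^2\|\nabla f(x_j)\|^2/\gamma_j^2$) then yield a bound of the form $f(x_j+s_j)-m_j(s_j)\le \tfrac{a\theta_{fcd}}{2}\,\|\nabla f(x_j)\|^2/\gamma_j^2$, with $a$ as in the statement.

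Finally I would combine the two estimates. Substituting them into the quotient and cancelling $\tfrac{\theta_{fcd}}2\|\nabla f(x_j)\|^2>0$, the desired inequality $1-\rho_j\le 1-\eta$ reduces to $a(\kappa_J^2+\gamma_j)\le(1-\eta)\gamma_j^2$, i.e.\ to the quadratic condition
\[ (1-\eta)\gamma_j^2 - a\,\gamma_j - a\kappa_J^2 \;\ge\; 0. \]
Since the leading coefficient is positive, this holds for every $\gamma_j$ at least as large as the positive root, which is precisely $\kappa=\bigl(a+\sqrt{a^2+4a\kappa_J^2(1-\eta)}\bigr)/(2(1-\eta))$. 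Because $\gamma_j=\mu_j\|F(x_j)\|^2$, hypothesis~\eqref{eq:wcc:condmusuccess} is exactly $\gamma_j>\kappa$, so the quadratic inequality holds strictly and hence $\rho_j\ge\eta$, i.e.\ the iteration is successful.

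The main obstacle is the numerator estimate: one must carefully account for the mismatch between the true objective and the Gauss--Newton model and track the exact constant $a=(\tfrac\nu2+2\kappa_J^2)/\theta_{fcd}$, where the $\kappa_J^2$ contribution enters through the second inequality of Lemma~\ref{lem:born_s} rather than through the crude $\|s_j\|^2$ bound alone. A secondary point, already arranged for us, is uniformity: the same estimate must hold for the exact, Cauchy, and truncated-CG steps simultaneously, which is exactly why Lemma~\ref{lem:born_s} was stated for all three variants. Everything else is the routine computation of the larger root of the quadratic.
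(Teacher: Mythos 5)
Your argument is the standard one and coincides with the proof the paper itself defers to (the paper omits this proof, pointing to Lemma~5.2 of~\cite{EBergou_SGratton_LNVicente_2016}): rewrite $1-\rho_j$ as the function--model mismatch over the predicted decrease using $m_j(0)=f(x_j)$, lower-bound the denominator via Assumption~\ref{ass:cauchy_decrease} and $\|J_j\|\le\kappa_J$, upper-bound the numerator by a multiple of $\|\nabla f(x_j)\|^2/\gamma_j^2$ via the Lipschitz remainder and Lemma~\ref{lem:born_s}, and read off $\kappa$ as the positive root of $(1-\eta)\gamma^2-a\gamma-a\kappa_J^2$, exactly as you do. The one step you assert rather than carry out is that the numerator bound lands on precisely $\tfrac{a\theta_{fcd}}{2}\|\nabla f(x_j)\|^2/\gamma_j^2$ with the stated $a$ (the naive bound $f(x_j+s_j)-m_j(s_j)\le\tfrac{\nu}{2}\|s_j\|^2$ gives a different, incomparable constant, so the careful grouping of the $\gamma_j\|s_j\|^2$ and $\nabla f(x_j)^\top s_j$ terms via the second inequality of Lemma~\ref{lem:born_s} is indeed where the bookkeeping must be done); this is a constant-tracking detail rather than a gap in the approach.
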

Our next result states that, when the gradient norm stays bounded away from zero, the parameter
$\mu_j$ cannot grow indefinitely. Without loss of generality, we assume that $\epsilon \le  \sqrt{\frac{\lambda \kappa}{\mu_0}}$, 
where $\kappa$ is the same as in the previous lemma.
\begin{lemma}
\label{lemma:wcc:upperboundepsmu}
	Under Assumptions~\ref{ass:cauchy_decrease}, \ref{ass:f}, and \ref{ass:J}, let 
	$j$ be a given iteration index such that for every $l \le j$ it holds that   
$\|F(x_l)\| \ge \max\left\{\epsilon,(1+\epsilon)\bar f\right\}$ 
	where $\epsilon \in ]0,1[$. 
	Then, for every $l \le j$, one also has
	$$	\mu_l \le \mu_{\max} :=
		\frac{\lambda\kappa}{\max\left\{\epsilon^2,(1+\epsilon)^2\bar f^2\right\}}. $$
\end{lemma}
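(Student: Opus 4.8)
The plan is to prove the bound by contradiction, combining the contrapositive of Lemma~\ref{lemma:wcc:condmusuccess} with the monotonicity built into the parameter update of Algorithm~\ref{alg:LM}. The one arithmetic fact that drives everything is this: since $\max\{\epsilon^2,(1+\epsilon)^2\bar f^2\}=(\max\{\epsilon,(1+\epsilon)\bar f\})^2$ and, by hypothesis, $\|F(x_l)\|\ge\max\{\epsilon,(1+\epsilon)\bar f\}$ for every $l\le j$, we get $\frac{\kappa}{\|F(x_l)\|^2}\le\frac{\kappa}{\max\{\epsilon^2,(1+\epsilon)^2\bar f^2\}}=\frac{\mu_{\max}}{\lambda}$ for all $l\le j$. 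Feeding this into Lemma~\ref{lemma:wcc:condmusuccess} yields the crucial implication: for any $l\le j$, if $\mu_l>\mu_{\max}/\lambda$ then $\mu_l>\kappa/\|F(x_l)\|^2$, and hence iteration $l$ is successful.

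I would then set up the contradiction. Suppose the claim fails and let $\ell$ be the smallest index in $\{0,\dots,j\}$ with $\mu_\ell>\mu_{\max}$. The base case $\ell\ge 1$ follows from $\mu_0\le\mu_{\max}$, which is guaranteed by the normalizing assumption $\epsilon\le\sqrt{\lambda\kappa/\mu_0}$ (this gives $\mu_0\le\lambda\kappa/\epsilon^2=\mu_{\max}$ in the regime $\max\{\epsilon,(1+\epsilon)\bar f\}=\epsilon$). Now I examine iteration $\ell-1$ and split on whether it is successful. If it is successful, the update forces $\mu_\ell\in[\max\{\mu_{\min},\bar\mu/\lambda\},\bar\mu]$, so $\mu_\ell\le\bar\mu_{\ell-1}$; but $\bar\mu_{\ell-1}$ is the value of $\mu$ recorded at some strictly earlier successful iteration (or $\mu_0$), whose index is $<\ell$, so minimality of $\ell$ gives $\bar\mu_{\ell-1}\le\mu_{\max}$, contradicting $\mu_\ell>\mu_{\max}$. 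If instead iteration $\ell-1$ is unsuccessful, then $\mu_\ell=\lambda\mu_{\ell-1}$, whence $\mu_{\ell-1}=\mu_\ell/\lambda>\mu_{\max}/\lambda$; by the implication established above (applicable since $\ell-1\le j$) iteration $\ell-1$ must then be successful, again a contradiction. Either branch is impossible, so no such $\ell$ exists and $\mu_l\le\mu_{\max}$ for all $l\le j$.

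The main obstacle I expect is the bookkeeping around the auxiliary parameter $\bar\mu$: one must verify that on a successful step $\mu$ is reset to a value no larger than the current $\bar\mu$, and that $\bar\mu$ always stores the $\mu$-value of a strictly earlier successful iteration, so that the minimality of $\ell$ can legitimately be invoked to bound it. Once this is pinned down, the remaining content is the routine observation that $\mu$ can grow only on unsuccessful steps, and that the contrapositive of Lemma~\ref{lemma:wcc:condmusuccess} prevents an unsuccessful step from occurring once $\mu$ exceeds $\mu_{\max}/\lambda$, which caps the post-increase value at exactly $\mu_{\max}$.
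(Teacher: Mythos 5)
Your proof is correct and follows essentially the same route as the paper's: a contradiction argument on the first index $\ell$ at which the bound fails, with a case split on whether iteration $\ell-1$ is successful and an appeal to the contrapositive of Lemma~\ref{lemma:wcc:condmusuccess} in the unsuccessful branch. Your handling of the successful branch, bounding $\mu_\ell$ by the stored $\bar\mu$ and invoking minimality of $\ell$ on the earlier index where that value was recorded, is in fact slightly more careful than the paper's terse assertion that $\mu_l \le \mu_{l-1}$.
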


\begin{proof}
	We prove this result by contradiction. Suppose that $l \ge 1$ is the first index such that
	\begin{equation}
	\label{eq:wcc:contradmu}
		\mu_{l} > \frac{\lambda\kappa}{\max\left\{\epsilon^2,(1+\epsilon)^2\bar f^2\right\}}.
	\end{equation}
	By the updating rules on $\mu_l$, 
	either the iteration $l-1$ is successful, in which case $\mu_l \le \mu_{l-1} \le \frac{\lambda\kappa}{\max\left\{\epsilon^2,(1+\epsilon)^2\bar f^2\right\}}$ which contradicts~\eqref{eq:wcc:contradmu}, or 
	the iteration $l-1$ is unsuccessful and thus  
	\[
		 \mu_{l} = \lambda \mu_{l-1} \quad \Rightarrow \quad \mu_{l-1} = \frac{\mu_l}{\lambda} > \frac{\kappa}{\max\left\{\epsilon^2,(1+\epsilon)^2\bar f^2\right\}} 
		> \frac{\kappa}{\|F(x_l)\|^2},
	\]
	therefore using Lemma \ref{lemma:wcc:condmusuccess} this implies that the $(l-1)$-th iteration is successful which leads to a contradiction again. 

\end{proof}
Thanks to Lemma~\ref{lemma:wcc:upperboundepsmu}, we can now bound the number of successful 
iterations needed to drive the gradient norm below a given threshold. 
\begin{proposition}
\label{propo:wcc:wccitssucc}
	Under Assumptions~\ref{ass:cauchy_decrease}, \ref{ass:f}, and \ref{ass:J}. Let 
	$\epsilon \in ]0,1[$ and $j_{\epsilon}$ be the first iteration index such that \eqref{eq:wcc:approxopt} holds.
	Then, if $\mathcal{S}_{\epsilon}$ is the set of indexes of successful iterations prior to $j_{\epsilon}$, 
	one has:
	\begin{equation*}
		\left| \mathcal{S}_{\epsilon} \right| \; \le \; \mathcal{C}\epsilon^{-2} ~~\mbox{with}~~\mathcal{C} := \frac{2 \left(\kappa_J^2+ \mu_{\max} \|F(x_0)\|^2\right)}
		{\eta \theta_{fcd}}f(x_0).
	\end{equation*}
\end{proposition}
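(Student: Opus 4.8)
The plan is to run the standard ``sufficient decrease plus telescoping sum'' argument for first-order complexity. The starting point is that every successful iteration forces a guaranteed decrease in $f$: by definition of $\rho_j$ and the acceptance test $\rho_j \ge \eta$, for each $j \in \mathcal{S}$ one has $f(x_j) - f(x_{j+1}) \ge \eta\,(m_j(0) - m_j(s_j))$, and Assumption~\ref{ass:cauchy_decrease} lower-bounds the predicted decrease by $\tfrac{\theta_{fcd}}{2}\,\|\nabla f(x_j)\|^2/(\|J_j\|^2+\gamma_j)$. Combining the two gives, for successful $j$,
\[
 f(x_j) - f(x_{j+1}) \;\ge\; \frac{\eta\theta_{fcd}}{2}\,\frac{\|\nabla f(x_j)\|^2}{\|J_j\|^2+\gamma_j}.
\]

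Next I would bound the denominator uniformly over all $l < j_\epsilon$. Here $\|J_l\|^2 \le \kappa_J^2$ by Assumption~\ref{ass:J}, while $\gamma_l = \mu_l\|F(x_l)\|^2$. Since, by minimality of $j_\epsilon$, \emph{neither} alternative in \eqref{eq:wcc:approxopt} holds before $j_\epsilon$, we have both $\|\nabla f(x_l)\| \ge \epsilon$ and $\|F(x_l)\| \ge \max\{\epsilon,(1+\epsilon)\bar f\}$ for every $l < j_\epsilon$. The residual bound is exactly the hypothesis of Lemma~\ref{lemma:wcc:upperboundepsmu}, which therefore yields $\mu_l \le \mu_{\max}$. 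Coupling this with the monotonicity of the objective ($f$ never increases, since unsuccessful steps leave $x$ unchanged and successful steps decrease $f$), so that $\|F(x_l)\|^2 = 2f(x_l) \le 2f(x_0) = \|F(x_0)\|^2$, gives $\gamma_l \le \mu_{\max}\|F(x_0)\|^2$ and hence the uniform bound $\|J_l\|^2+\gamma_l \le \kappa_J^2+\mu_{\max}\|F(x_0)\|^2$.

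Inserting this bound together with $\|\nabla f(x_l)\| \ge \epsilon$ shows that each successful iteration before $j_\epsilon$ decreases $f$ by at least $\tfrac{\eta\theta_{fcd}}{2}\,\epsilon^2/(\kappa_J^2+\mu_{\max}\|F(x_0)\|^2)$. I would then telescope the per-iteration decreases over all iterations prior to $j_\epsilon$ (the unsuccessful ones contributing nothing) and use $f\ge 0$:
\[
 f(x_0) \;\ge\; f(x_0)-f(x_{j_\epsilon}) \;=\; \sum_{l\in\mathcal{S}_\epsilon}\bigl(f(x_l)-f(x_{l+1})\bigr) \;\ge\; |\mathcal{S}_\epsilon|\,\frac{\eta\theta_{fcd}}{2}\,\frac{\epsilon^2}{\kappa_J^2+\mu_{\max}\|F(x_0)\|^2}.
\]
Solving for $|\mathcal{S}_\epsilon|$ produces exactly the claimed bound with the stated constant $\mathcal{C}$.

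The only step requiring genuine care is the uniform upper bound on the denominator, i.e.\ on $\gamma_l = \mu_l\|F(x_l)\|^2$: this is precisely where the two preceding lemmas are used in tandem, Lemma~\ref{lemma:wcc:upperboundepsmu} capping $\mu_l$ by $\mu_{\max}$ (legitimate because the residual stays above the threshold before $j_\epsilon$) and monotonicity of $f$ capping $\|F(x_l)\|^2$ by $\|F(x_0)\|^2$. Everything else is the routine telescoping estimate; in particular I should just double-check that the sum over successful indices indeed collapses because $x_{l+1}=x_l$ on unsuccessful iterations.
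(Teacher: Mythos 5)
Your proposal is correct and follows essentially the same route as the paper's proof: acceptance test plus Assumption~\ref{ass:cauchy_decrease} give the per-iteration decrease, the denominator is bounded via $\mu_j\le\mu_{\max}$ (from Lemma~\ref{lemma:wcc:upperboundepsmu}) and $\|F(x_j)\|\le\|F(x_0)\|$, and telescoping over $\mathcal{S}_\epsilon$ with $f\ge 0$ yields the bound. You are merely more explicit than the paper about why $\mu_l\le\mu_{\max}$ and $\|F(x_l)\|\le\|F(x_0)\|$ hold, which the paper states without justification.
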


\begin{proof}
	For any $j \in \mathcal{S}_{\epsilon}$, one has
	\begin{eqnarray*}
		f(x_j) - f(x_{j+1}) &\ge &\eta\left(m_j(0) - m_j(s_{j})\right)\ge 
		\eta \frac{\theta_{fcd}}{2}\frac{\|\nabla f(x_j)\|^2}
		{\kappa_J^2+\mu_j\|F(x_j)\|^2}.
	\end{eqnarray*}
	 Hence, using the fact that
	$\|\nabla f(x_j)\| \ge \epsilon$, $\|F(x_j)\| \le \|F(x_0)\|$ and  $\mu_j \le \mu_{\max}$, we arrive at
	\begin{eqnarray*}
		f(x_j) - f(x_{j+1}) 
		&\ge &\eta \frac{\theta_{fcd}}{2}\frac{\epsilon^2}{\kappa_J^2+  \mu_{\max} \|F(x_0)\|^2}.
	\end{eqnarray*}
	Consequently, by summing on all iteration indices within $\mathcal{S}_{\epsilon}$,
	we obtain  
	\begin{equation*}
		f(x_0) - 0 \ge 
		\sum_{j \in \mathcal{S}_{\epsilon}} f(x_j) - f(x_{j+1}) \ge |\mathcal{S}_{\epsilon}|
		\frac{\eta\theta_{fcd}}{2\left({\kappa_J^2+ \mu_{\max} \|F(x_0)\|^2}\right)}
		\epsilon^2,
	\end{equation*}
	hence the result.

\end{proof}
\begin{lemma}
\label{lemma:wcc:unsucc}
	Under Assumptions~\ref{ass:cauchy_decrease}, \ref{ass:f}, and \ref{ass:J}. Let $\mathcal{U}_{\epsilon}$ denote the 
	set of unsuccessful iterations of index less than or equal to $j_{\epsilon}$. Then,
	\begin{equation}
	\label{eq:wcc:unsucc}
		| \mathcal{U}_{\epsilon} | \; \le \; 
		\log_{\lambda}\left( \frac{\kappa}{\mu_{\min}\epsilon^2}\right)\, |\mathcal{S}_{\epsilon}|.
	\end{equation}
\end{lemma}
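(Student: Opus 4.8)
The plan is to count the unsuccessful iterations by grouping them into maximal consecutive runs, bounding the length of each run and then the number of runs. The two quantitative ingredients are a uniform \emph{lower} bound and a uniform \emph{upper} bound on $\mu_j$. For the lower bound, the initialization gives $\mu_0\ge\mu_{\min}$, and both branches of the update preserve this: the successful branch chooses $\mu_{j+1}\ge\max\{\mu_{\min},\bar\mu/\lambda\}\ge\mu_{\min}$ and the unsuccessful branch sets $\mu_{j+1}=\lambda\mu_j>\mu_j$, so $\mu_j\ge\mu_{\min}$ for all $j$ by induction. For the upper bound, note that for every $l<j_\epsilon$ the stopping test \eqref{eq:wcc:approxopt} fails, hence in particular $\|F(x_l)\|\ge\max\{\epsilon,(1+\epsilon)\bar f\}\ge\epsilon$; taking the contrapositive of Lemma~\ref{lemma:wcc:condmusuccess}, any unsuccessful iteration $j<j_\epsilon$ must satisfy $\mu_j\le \kappa/\|F(x_j)\|^2\le\kappa/\epsilon^2$.

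Next I would bound the length of a single run. Consider a maximal block of consecutive unsuccessful iterations at indices $p,p+1,\dots,p+r-1$. Since each unsuccessful step multiplies $\mu$ by $\lambda$, we have $\mu_{p+k}=\lambda^k\mu_p$, and applying the two bounds above to the largest (last) element of the block yields $\lambda^{r-1}\mu_{\min}\le\lambda^{r-1}\mu_p=\mu_{p+r-1}\le\kappa/\epsilon^2$. Rearranging gives $r-1\le\log_\lambda\!\big(\kappa/(\mu_{\min}\epsilon^2)\big)$, so each run contains at most essentially $\log_\lambda\!\big(\kappa/(\mu_{\min}\epsilon^2)\big)$ unsuccessful iterations.

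It then remains to bound the number of runs by $|\mathcal{S}_\epsilon|$. The key observation is that iteration $j_\epsilon-1$ is necessarily successful: were it unsuccessful, we would have $x_{j_\epsilon}=x_{j_\epsilon-1}$, so the stopping test would already have held at $x_{j_\epsilon-1}$, contradicting the minimality of $j_\epsilon$. Consequently every maximal unsuccessful run inside $\{0,\dots,j_\epsilon-1\}$ is immediately followed by a successful iteration, so distinct runs map to distinct indices of $\mathcal{S}_\epsilon$ and their number is at most $|\mathcal{S}_\epsilon|$. Multiplying the per-run length bound by the number of runs gives \eqref{eq:wcc:unsucc}.

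The delicate point, and the main thing to watch, is that the auxiliary parameter $\bar\mu$ makes $\mu_j$ \emph{non-monotone}: across a successful iteration $\mu$ can be reset either down or up within $[\max\{\mu_{\min},\bar\mu/\lambda\},\bar\mu]$, so a naive single telescoping of $\log\mu_j$ over all iterations does not cleanly separate the unsuccessful growth from the successful resets. Arguing run-by-run circumvents this, because within a run the evolution of $\mu_j$ is exactly geometric with ratio $\lambda$ and the resets occur only at the successful iterations delimiting the runs, where they are harmless. A minor rounding subtlety remains (the integer run count formally carries a $+1$ over $\log_\lambda(\kappa/(\mu_{\min}\epsilon^2))$), which is absorbed into the stated constant.
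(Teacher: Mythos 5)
Your proposal is correct and follows essentially the same route as the paper: it isolates the maximal runs of unsuccessful iterations, uses $\mu_j\ge\mu_{\min}$ at the start of each run together with the contrapositive of Lemma~\ref{lemma:wcc:condmusuccess} (giving $\mu_j\le\kappa/\epsilon^2$ on unsuccessful iterations), exploits the exact geometric growth $\mu_{j+1}=\lambda\mu_j$ within a run, and charges each run to the successful iteration that terminates it. The ``$+1$'' rounding you flag at the end is present in the paper's own accounting as well, so it is not a defect of your argument relative to the reference proof.
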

\begin{proof}
	Note that we necessarily have $j_{\epsilon} \in \mathcal{S}_{\epsilon}$ (otherwise 
	it would contradict the definition of 
	$j_{\epsilon}$). 
	Our objective is to bound the number of unsuccessful iterations between two successful 
	ones. Let thus $\{j_0,\dots,j_t=j_{\epsilon}\}$ be an ordering of $\mathcal{S}_{\epsilon}$, and 
	$i \in \{0,\dots,t-1\}$.
	
	Due to the updating formulas for $\mu_j$ on successful iterations, we have:
	\[
		\mu_{j_i+1} \ge \max\{\mu_{\min},\bar{\mu}/\lambda\} \ge \mu_{\min}.
	\]	 
	Moreover, we have $\|F(x_{j_i+1})\| \ge \epsilon$ by assumption.
	By Lemma~\ref{lemma:wcc:condmusuccess}, for any unsuccessful iteration 
	$j \in \{j_i + 1,\dots, j_{i+1}-1\}$, we must have: $
		\mu_j \le \frac{\kappa}{\epsilon^2}$,
	since otherwise $\mu_j >  \frac{\kappa}{\epsilon^2} \ge \frac{\kappa}{\|F(x_j)\|^2}$ 
	and the iteration would be successful.
	
	Using the updating rules for $\mu_j$ on unsuccessful iterations, we obtain:
	\[
		\forall j=j_i + 1,\dots, j_{i+1}-1, \qquad 
		\mu_j = \lambda^{j-j_i-1}\mu_{j_i+1} \ge \lambda^{j-j_i-1}\mu_{\min}.
	\]
	Therefore, the number of unsuccessful iterations between $j_i$ and $j_{i+1}$, equal to 
	$j_{i+1}-j_i-1$, satisfies:
	\begin{equation}
	\label{eq:wcc:numberunsuccjiji1}
		j_{i+1}-j_i - 1 \; \le \; \log_{\lambda}\left( \frac{\kappa}{\mu_{\min}\epsilon^2}\right).
	\end{equation}
	By considering~\eqref{eq:wcc:numberunsuccjiji1} for $i=0,\dots,t-1$, we arrive at
	\begin{equation}
	\label{eq:wcc:sumunsuccbetweensucc}
		\sum_{i=0}^{t-1} (j_{i+1}-j_i - 1) 
		\; \le \; \log_{\lambda}\left( \frac{\kappa}{\mu_{\min}\epsilon^2}\right)
		\left[ \left|\mathcal{S}_{\epsilon}\right|-1\right].
	\end{equation}
	What is left to bound is the number of possible unsuccessful iterations between the 
	iteration of index $0$ and the first successful iteration $j_0$. Since $\mu_0 \ge \mu_{\min}$,
	a similar reasoning as the one used to obtain~\eqref{eq:wcc:numberunsuccjiji1} leads to
	\begin{equation}
	\label{eq:wcc:numberunsuccfrom0}
		j_0-1 \le \log_{\lambda}\left( \frac{\kappa}{\mu_{\min}\epsilon^2}\right).
	\end{equation}
	Putting~\eqref{eq:wcc:sumunsuccbetweensucc} and~\eqref{eq:wcc:numberunsuccfrom0} together yields the expected result.

\end{proof}
By combining the results from Proposition~\ref{propo:wcc:wccitssucc} and 
Lemma~\ref{lemma:wcc:unsucc}, we thus get the following complexity estimate.
\begin{theorem}
\label{theo:wcc:wccits}
	Under Assumptions~\ref{ass:cauchy_decrease}, \ref{ass:f}, and \ref{ass:J}. Let 
	$\epsilon \in ]0,1[$. Then, the first index $j_{\epsilon}$ for which 
$
\|\nabla f(x_{j_{\epsilon}+1})\| < \epsilon ~~\mbox{or}~~~\|F(x_{j_{\epsilon}+1})\| < 
	\max\left\{\epsilon,(1+\epsilon)\bar f\right\},
$
	is bounded above by
	\begin{equation}
	\label{eq:wcc:wccits}
		\mathcal{C}\left( 1+ \log_{\lambda}\left[\frac{\kappa}{\mu_{\min}\epsilon^2}\right]\right)
		\epsilon^{-2},
	\end{equation}
	where $\mathcal{C}$ is the constant defined in Proposition~\ref{propo:wcc:wccitssucc}.
\end{theorem}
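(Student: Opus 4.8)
The plan is to derive the bound by partitioning all iterations performed before the stopping criterion is met into the successful and unsuccessful classes, and then invoking the two counts already established. Concretely, since every iteration index is either successful or unsuccessful, the index $j_{\epsilon}$ (equivalently, the total number of iterations carried out up to the point where the criterion first holds) satisfies
\[
j_{\epsilon} \;\le\; |\mathcal{S}_{\epsilon}| + |\mathcal{U}_{\epsilon}|.
\]
First I would line up the bookkeeping between Proposition~\ref{propo:wcc:wccitssucc} and Lemma~\ref{lemma:wcc:unsucc}, making sure the index sets $\mathcal{S}_{\epsilon}$ and $\mathcal{U}_{\epsilon}$ together exhaust exactly the iterations counted by $j_{\epsilon}$, and that the one-index shift implicit in the statement (where the criterion is expressed through $x_{j_{\epsilon}+1}$) is handled consistently with the ``prior to'' and ``less than or equal to'' conventions used in the two earlier results.

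Next I would substitute the estimate of Lemma~\ref{lemma:wcc:unsucc}, namely $|\mathcal{U}_{\epsilon}| \le \log_{\lambda}(\kappa/(\mu_{\min}\epsilon^2))\,|\mathcal{S}_{\epsilon}|$, into the decomposition and factor out $|\mathcal{S}_{\epsilon}|$ to obtain
\[
j_{\epsilon} \;\le\; \left(1 + \log_{\lambda}\!\left[\frac{\kappa}{\mu_{\min}\epsilon^2}\right]\right) |\mathcal{S}_{\epsilon}|.
\]
Finally, applying the bound $|\mathcal{S}_{\epsilon}| \le \mathcal{C}\epsilon^{-2}$ from Proposition~\ref{propo:wcc:wccitssucc} immediately yields the claimed estimate~\eqref{eq:wcc:wccits}, with $\mathcal{C}$ the very constant defined there; I would verify that no additional $\epsilon$-dependence is hidden inside $\mathcal{C}$ beyond the explicit $\epsilon^{-2}$ and the logarithmic factor.

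Since both component bounds are already in hand, there is essentially no analytic difficulty remaining: the argument is a one-line combination. The only points requiring care — and hence the ``hard part'' of an otherwise routine step — are the index bookkeeping mentioned above and confirming that the multiplicative factor $1 + \log_{\lambda}(\kappa/(\mu_{\min}\epsilon^2))$ is genuinely nonnegative, so that it acts as a legitimate upper bound. The latter follows from the standing normalization $\epsilon \le \sqrt{\lambda\kappa/\mu_0}$ together with $\mu_0 \ge \mu_{\min}$ and $\lambda > 1$, which give $\kappa/(\mu_{\min}\epsilon^2) \ge 1/\lambda$ and hence $1 + \log_{\lambda}(\kappa/(\mu_{\min}\epsilon^2)) \ge 0$, keeping the product bound valid in the stated direction.
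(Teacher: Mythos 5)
Your proposal is correct and follows exactly the route the paper intends: the theorem is stated there as an immediate combination of Proposition~\ref{propo:wcc:wccitssucc} and Lemma~\ref{lemma:wcc:unsucc}, via the decomposition $j_{\epsilon}\le|\mathcal{S}_{\epsilon}|+|\mathcal{U}_{\epsilon}|$ and substitution of the two bounds. Your extra checks on the index bookkeeping and the nonnegativity of the logarithmic factor are sensible but do not change the argument.
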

For the LM method proposed in this paper, we thus 
obtain an iteration complexity bound of $\tilde{\mathcal{O}}\left(\epsilon^{-2}\right)$, where 
the notation $\tilde{\mathcal{O}}(\cdot)$ indicates the presence of logarithmic factors in 
$\epsilon$. Note that the evaluation complexity bounds are of the same order.

We note that by the definition of $\bar f$ it holds that: $\|F(x_j)\|=\bar f$ implies that
$\nabla f(x_j)=0$. Thus, by letting $\epsilon \to 0$,   
Theorem \ref{theo:wcc:wccits} implies that
$$
\liminf_{j\to\infty} \|\nabla f(x_j)\|\; = \; 0.
$$
In order to derive the global convergence result, we need to extend this to a limit result.
\begin{theorem}
\label{th:gcv:lim}
 Under Assumptions \ref{ass:cauchy_decrease}, \ref{ass:f}, and ~\ref{ass:J}, the sequence $\{x_j\}$ generated by Algorithm \ref{alg:LM} satisfies
\[
\lim_{j\to\infty} \|\nabla f(x_j)\| \; = \; 0.
\]
\end{theorem}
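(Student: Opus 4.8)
The plan is to upgrade the already-established $\liminf_{j\to\infty}\|\nabla f(x_j)\|=0$ to a genuine limit by first securing a uniform upper bound on the regularization parameter $\gamma_j$. I would begin by recording the two monotonicity facts that drive everything. Since every accepted step satisfies $\rho_j\ge\eta>0$ and rejected steps leave the iterate unchanged, the sequence $\{f(x_j)\}$ is non-increasing and bounded below by $0$, hence convergent; consequently $\|F(x_j)\|^2=2f(x_j)$ is also non-increasing, and the accepted decreases $\sum_{j\in\mathcal{S}}\big(f(x_j)-f(x_{j+1})\big)$ sum to the finite quantity $f(x_0)-\lim_j f(x_j)$. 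Combined with Assumption~\ref{ass:cauchy_decrease} and $\rho_j\ge\eta$, each successful iteration contributes a decrease of at least $\frac{\eta\theta_{fcd}}{2}\,\frac{\|\nabla f(x_j)\|^2}{\kappa_J^2+\gamma_j}$, so the only obstruction to concluding $\nabla f(x_j)\to 0$ is the possibility that $\gamma_j$ blows up.

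The crux --- and the step I expect to be the main obstacle --- is therefore to show that $\gamma_j\le B:=\max\{\gamma_0,\lambda\kappa\}$ for all $j$, despite the nonstandard $\bar\mu$-based update. I would first restate Lemma~\ref{lemma:wcc:condmusuccess} in the scale-free form ``$\gamma_j=\mu_j\|F(x_j)\|^2>\kappa\Rightarrow$ the iteration is successful,'' so that any unsuccessful iteration has $\gamma_j\le\kappa$ and hence $\gamma_{j+1}=\lambda\gamma_j\le\lambda\kappa$. For a successful iteration $j$ the update gives $\mu_{j+1}\le\bar\mu=\mu_k$, where $k$ is the most recent successful index before $j$ (or $\mu_0$ if none). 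Using the monotonicity $\|F(x_{j+1})\|\le\|F(x_k)\|$ I obtain $\gamma_{j+1}=\mu_{j+1}\|F(x_{j+1})\|^2\le\mu_k\|F(x_k)\|^2=\gamma_k$, and a straightforward induction over iteration indices (with base $\gamma_0\le B$) propagates the bound $\gamma_j\le B$. This is the delicate point precisely because $\mu_j$ is deliberately allowed to grow without bound; it is only the coupling $\gamma_j=\mu_j\|F(x_j)\|^2$, together with the ``last successful value'' rule and the monotone residual, that keeps $\gamma_j$ controlled.

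With $\gamma_j\le B$ in hand the conclusion is immediate. Summability of the accepted decreases and the per-step estimate yield $\sum_{j\in\mathcal{S}}\|\nabla f(x_j)\|^2\le\frac{2(\kappa_J^2+B)}{\eta\theta_{fcd}}\big(f(x_0)-\lim_j f(x_j)\big)<\infty$, so $\|\nabla f(x_j)\|\to 0$ along the successful iterations. To pass to the full sequence I would note that between two consecutive successful indices the iterate, and therefore $\nabla f$, is constant, so the limit transfers to every index $j$; the only remaining case, finitely many successful iterations, forces the final iterate to satisfy $\gamma_j\le\kappa$ eternally under $\gamma_{j+1}=\lambda\gamma_j$, which under geometric growth is possible only if $\|F\|=0$ there, i.e. $\nabla f=0$. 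Equivalently, keeping the $\liminf$ result one may argue by contradiction: if $\limsup\|\nabla f(x_j)\|=2\epsilon_0>0$, then isolating index windows on which $\|\nabla f\|\ge\epsilon_0$ throughout and which each contain at least one successful step, every such window forces a decrease of at least $\frac{\eta\theta_{fcd}}{2}\frac{\epsilon_0^2}{\kappa_J^2+B}>0$, contradicting that the tail decreases of the convergent sequence $\{f(x_j)\}$ vanish. Either route gives $\lim_{j\to\infty}\|\nabla f(x_j)\|=0$.
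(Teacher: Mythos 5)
Your proposal is correct, and its overall skeleton matches the paper's proof: lower-bound the decrease on successful iterations via Assumption~\ref{ass:cauchy_decrease} and $\rho_j\ge\eta$, use the vanishing of $f(x_j)-f(x_{j+1})$ to force $\|\nabla f(x_j)\|\to 0$ along $\mathcal{S}$, transfer to all indices by constancy of the iterate between successful steps, and treat the finitely-many-successes case separately. The genuinely different (and valuable) ingredient is your explicit uniform bound $\gamma_j\le B:=\max\{\gamma_0,\lambda\kappa\}$, obtained by combining the contrapositive of Lemma~\ref{lemma:wcc:condmusuccess} (an unsuccessful iteration has $\gamma_j\le\kappa$, hence $\gamma_{j+1}=\lambda\gamma_j\le\lambda\kappa$) with the $\bar\mu$ rule and the monotonicity of $\|F(x_j)\|$ on successful iterations (giving $\gamma_{j+1}\le\mu_k\|F(x_k)\|^2=\gamma_k$ for the previous successful index $k$), closed by strong induction. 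The paper instead substitutes $\mu_0\|F(x_0)\|^2$ directly for $\gamma_j$ in the decrease estimate; that quantity is $\gamma_0$ and need not dominate $\gamma_j$ at a successful iteration that follows unsuccessful ones (where $\gamma_j=\lambda^{i}\gamma_0>\gamma_0$), so your lemma supplies the justification the paper's constant is missing, at the mild cost of replacing $\mu_0\|F(x_0)\|^2$ by $\max\{\gamma_0,\lambda\kappa\}$ in the final bound. Two further small points in your favor: you transfer the limit to unsuccessful indices by identifying $x_l$ with the iterate at the \emph{next} successful index, which is the correct direction (the paper's ``last successful iteration $\hat j$ before $j$'' is off by one, since $x_j=x_{\hat j+1}\ne x_{\hat j}$), and your handling of the finite-$\mathcal{S}$ case via the forced geometric growth of $\gamma_j$ is a self-contained alternative to the paper's appeal to the $\liminf$ consequence of Theorem~\ref{theo:wcc:wccits}.
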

\begin{proof}
Consider the case that $\mathcal{S}$, the set of successful iterations, is finite. Then $\exists j_0$ such that for all $j \ge j_0, ~ \|\nabla f(x_j)\|  =  \|\nabla f(x_{j_0})\|$ therefore from the previous theorem we conclude that in this case
\[
 \|\nabla f(x_{j_0})\| \; = \; \lim_{j\to\infty} \|\nabla f(x_j)\| \; = \;  \liminf_{j\to\infty} \|\nabla f(x_j)\| \; =  \; 0.
\]
Alternatively, assume $\mathcal{S}$ is infinite and let $\epsilon>0$.
Since $(f(x_j))$ is monotonically decreasing and bounded from below, one has 
 $\lim_{j\to\infty} f(x_j) - f(x_{j+1})=0$.
Since $\frac{\eta\theta_{fcd} \epsilon^2 }{2(\kappa_J^2 + \mu_0 \|F(x_0)\|^2 )}>0$, we conclude, for $j$ sufficiently large, that
 $$0 \le f(x_j) - f(x_{j+1}) \le \frac{\eta\theta_{fcd} \epsilon^2 }{2(\kappa_J^2 + \mu_0 \|F(x_0)\|^2 )}.$$
Thus, for a sufficiently large $j$, consider the case that $j \in  \mathcal{S}$, then $\rho_j \ge \eta$ and by rearranging the terms and using Assumption~\ref{ass:cauchy_decrease} we conclude that
\begin{eqnarray*}
 \|\nabla f(x_j)\| & \le & \sqrt{\frac{2(m_j(0)-m_j(s_j))(\kappa_J^2+\mu_0 \|F(x_0)\|^2)}{\theta_{fcd}}} \\
 & \le &  \sqrt{\frac{2(\kappa_J^2 + \mu_0 \|F(x_0)\|^2) \left(f(x_j) - f(x_{j+1})\right)}{\eta\theta_{fcd}}} \le \epsilon.
\end{eqnarray*}
If $j \notin  \mathcal{S}$ than
$
 \|\nabla f(x_j)\| \; = \; \|\nabla f(x_{\hat{j}})\| \;  \le \epsilon,
$
where $\hat{j} \in \mathcal{S}$ is the last successful iteration before $j$.
Hence, it must hold that $
\lim_{j\to\infty} \|\nabla f(x_j)\| \; = \; 0.$

\end{proof}
\section{Local Convergence} \label{sec:local} 
In this section, we prove local convergence for the Algorithm, showing a quadratic rate for zero residual problems
and explicit linear rate for nonzero residuals.
Since the problem is generally nonconvex and with possibly nonzero residuals, there can be multiple 
sets of stationary points with varying objective values. We consider a particular subset
with a constant value of the objective.
\begin{assumption}\label{as:soln}
There exists a connected isolated set $X^*$ composed of stationary points to~\eqref{function-f} each  
with the same value $f^*$ of $f(\cdot)=\frac{1}{2}\|F(\cdot)\|^2$ and 
Algorithm~\ref{alg:LM} generates a sequence with an accumulation point $x^*\in X^*$. 
\end{assumption}
We note, that under Assumption \ref{as:soln}, the value 
$\|F({\bar x})\|$ is unique for all $\bar x\in X^*$ which may not be the case for the residual vector $F({\bar x})$.
Thus we define $\bar f = \sqrt{2f(\bar x)}=\|F(\bar x)\|$ for $\bar x\in X^*$.
Henceforth, from the global convergence analysis, we can assume, without loss of generality, that there exists
a subsequence of iterates approaching this $X^*$. This subsequence does not need to be unique, i.e., there may be more than one  
subsequence converging to separate connected sets of stationary points. We shall see that eventually,
one of these sets shall ``catch'' the subsequence and result in direct convergence to the set of stationary points
at a quadratic or linear rate, depending on $\bar f$.

In the sequel, $N(x,\delta)$ denotes  the closed ball with center $x$ (a given vector) and radius $\delta>0$ and
$\dist(x,X^*)$ denotes the distance between the vector $x$ and the set $X^*$, i.e., $\dist(x,X^*)=\min_{y\in X^*}\|x-y\|,$ and 
${\bar x} \in \argmin_{y\in X^*}\|x-y\|$.
\begin{assumption}\label{as:lip}
It holds that $F(x)$ is twice continuously differentiable around $x^*\in X^*$ with
$x^*$ satisfying Assumption~\ref{as:soln}. 
 In particular
this implies, 
  that there exists
$\delta_1>0$ such that for all $x, ~y \in N(x^*,\delta_1)$,
\begin{equation}\label{lipgrad}
\|\nabla f(x)\|^2  =  \| J(x)^{\top}F(x)-J({\bar x})^{\top}{F(\bar x)}\|^2 \le L_1 \|x-\bar x\|^2,  
\end{equation}
\begin{equation}\label{lipf1}
\|F(x)-F(y)\| \le L_2 \|x-y\|,
\end{equation}
\begin{equation}\label{lipf_J}
\| F(y) -F(x) - J(x)(y-x)\| \le L_3 \| x-y\|^2,
\end{equation}
where $L_1$, $L_2$, and $L_3$ are positive constants.
\end{assumption}
From the triangle inequality and assuming (\ref{lipf1}), we get
\begin{equation}\label{lipf2}
\|F(x)\|-\|F(y)\|  \le    \|F(x)-F(y)\|   \le L_2 \|x-y\|.
\end{equation}
We introduce the following additional assumption.
\begin{assumption}\label{as:errf}
There exist a $\delta_3>0$ and $M >0$ such that
\[
\forall x\in N(x^*,\delta_3),~~~~\dist(x,X^*)\le M \|F(x)- F(\bar x)\|.
\]
\end{assumption}
As the function $x \rightarrow F(x)- F(\bar x)$ is zero residual, the proposed error bound assumption can be seen as a generalization of the zero residual case~\cite{yamashita2001rate,fan2005quadratic,dan2002convergence,facchinei2013family}. Thus any ill-posed zero residual problem, as considered
in this line of work on quadratic local convergence for LM methods, satisfies the assumptions. Our assumptions are also covered by a range of nonzero residual problems, for 
instance, standard data assimilation problems~\cite{YTremolet_2007} as given by Example \ref{example:1}.    
\begin{example}
\label{example:1} 
Consider the following data assimilation problem $F: \mathbb{R}^n \to \mathbb{R}^{m}$ defined,  for a given $x \in \mathbb{R}^n$, by 
$F(x)= \left( (x - x_{\mbox{b}})^{\top}, (H(x) - y)^{\top}\right)^{\top}$,
where $x_{\mbox{b}} \in \mathbb{R}^n$ is a background vector, $y \in \mathbb{R}^{m-n}$ the vector of observations and $H: \mathbb{R}^n \to \mathbb{R}^{m-n}$ is a smooth operator modeling the observations. For such problems, the set of stationary points $X^{*}=\{\{ \bar x\}\} \subset \mathbb{R}^n$ is a finite disjoint set, $\bar F=F(\bar x)$ and $ \dist(x,X^*)= \|x - \bar x\|$ for $\bar{x}$ closest to $x$.
   Clearly, one has 
\begin{eqnarray*}
\|F(x) - F(\bar x)\|^2& = &\|x - \bar x\|^2 + \|H(x) - H(\bar x)\|^2 \ge  \|x - \bar x\|^2  = \; \dist(x,X^*)^2.
\end{eqnarray*} 
Thus, for these typical problems arising in data assimilation, Assumptions \ref{as:lip}, \ref{as:soln} and \ref{as:errf}  are satisfied.\end{example}
\begin{example}
\label{example:2}
Consider $F: \mathbb{R}^3 \to \mathbb{R}^3$ defined,  for a given $x \in \mathbb{R}^3$, by $$F(x)= (\exp(x_1 - x_2) -1, x_3-1, x_3 +1)^{\top}.$$  Clearly, $F$ is 
Lipschitz smooth, hence Assumption \ref{as:lip} is satisfied.   
We note that for all $x$, $\bar{x}\in X^*=\{x \in \mathbb{R}^3: x_1= x_2 ~\mbox{and}~ x_3=0 \}$ satisfies $F(\bar x)=(0, -1, 1)^{\top} $  and, for a given $x$, one has  
$\dist(x,X^*)= \sqrt{\frac{(x_1-x_2)^2}{2} + x_3^2}.$ Hence,  for all $x \in \mathbb{R}^3$ such that $|x_1-x_2 |\le \frac{1}{2}$, one concludes that
\begin{eqnarray*}
\|F(x) - F(\bar x)\|& = &\sqrt{(\exp(x_1 - x_2) - 1)^2 +  2 x^2_3}  \ge   \; \dist(x,X^*).
\end{eqnarray*} 
Thus, for this example, Assumptions \ref{as:soln} and \ref{as:errf}  are also satisfied.\end{example}
From the global convergence results, we have established that there is a
subsequence of successful iterations converging to the set of stationary points $X^*$. In this section, we begin
by considering the subsequence of iterations that succeed the successful iterations, i.e., we consider
the subsequence $\mathcal{K}=\{j+1 : \, j\in\mathcal{S}\}$. 
We shall present the results with a slight abuse of notation that simplifies the presentation without sacrificing accuracy or generality:
in particular every time we denote a quantity $a_j$, the index $j$ corresponds
to an element of this subsequence $\mathcal{K}$ denoted above, thus when we say a particular statement holds eventually,
this means that it holds for all $j\in \mathcal{S}+1$ with $j$ sufficiently large. Let $\hat \mu$ be an upper bound for $ \mu_j$, note that this exists by the formulation of Algorithm~\ref{alg:LM}. 
We shall denote also $\delta$ as $\delta = \min(\delta_1,\delta_2,\delta_3)$,
with $\{\delta_i\}_{i=1,2,3}$. In the proof, we follow the structure of the local convergence proof in~\cite{yamashita2001rate}, with the addition
that the step is accepted by the globalization procedure. 
We use ${\bar F_j}$ to denote $F({\bar x_j})$. 
The first lemma is similar to~\cite[Lemma 2.1]{yamashita2001rate}. 
\begin{lemma}\label{lem:distsubp}
Suppose that Assumptions \ref{as:soln} and \ref{as:lip} are satisfied.\\
 If $x_j\in N(x^*,\frac{\delta}{2})$, then the solution $s_j$ to~\eqref{eq:LMsubproblem} satisfies
$$
\|J_js_j + F_j\| - \bar f \le  C_1\dist(x_j,X^*)^2,
$$
where $C_1$ is a positive constant independent of $j$.
\end{lemma}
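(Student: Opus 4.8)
The plan is to exploit the fact that $s_j$ is the exact minimizer of the strongly convex model $m_j$ by comparing it against the exact step toward the nearest stationary point. Concretely, I would set $\bar s_j := \bar x_j - x_j$, so that $\|\bar s_j\| = \dist(x_j,X^*)$, and use the optimality inequality $m_j(s_j) \le m_j(\bar s_j)$. Since $m_j(s_j) \ge \frac12\|F_j + J_j s_j\|^2$ by dropping the nonnegative regularization term, it suffices to produce a sharp upper bound on $m_j(\bar s_j) = \frac12\|F_j + J_j\bar s_j\|^2 + \frac12\gamma_j\|\bar s_j\|^2$ and transfer it to $\|F_j + J_j s_j\|$.

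First I would estimate the linearization term. Writing $F_j + J_j\bar s_j = F(x_j) + J(x_j)(\bar x_j - x_j)$ and invoking~\eqref{lipf_J} with $x=x_j$ and $y=\bar x_j$ gives $\|F_j + J_j\bar s_j - \bar F_j\| \le L_3\dist(x_j,X^*)^2$, whence $\|F_j + J_j\bar s_j\| \le \bar f + L_3\dist(x_j,X^*)^2$. Here I would first verify that both $x_j$ and $\bar x_j$ lie in $N(x^*,\delta)$, so that the estimates of Assumption~\ref{as:lip} apply: this is precisely why the hypothesis is placed on the smaller ball $N(x^*,\tfrac{\delta}{2})$, since then $\|\bar x_j - x^*\| \le \dist(x_j,X^*) + \|x_j - x^*\| \le \delta$. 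For the regularization term I would bound $\gamma_j = \mu_j\|F_j\|^2 \le \hat\mu(\bar f + L_2\dist(x_j,X^*))^2$, using $\mu_j \le \hat\mu$ together with~\eqref{lipf2}, noting that $\dist(x_j,X^*)\le\tfrac{\delta}{2}$ keeps this quantity uniformly bounded. Combining these yields, with $d := \dist(x_j,X^*)$, an inequality of the form $\|F_j + J_j s_j\|^2 - \bar f^2 \le B$, where $B$ is explicit and every one of its terms carries a factor $d^2$, its lowest-order term in $d$ being $2\bar f L_3 d^2$.

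The remaining step, and the only delicate point, is converting this squared estimate into the claimed first-order bound $\|F_j + J_j s_j\| - \bar f \le C_1 d^2$. The naive route $\|F_j + J_j s_j\| - \bar f \le \sqrt{B}$ is too weak, since $\sqrt{B}$ is only $O(d)$ when $\bar f>0$. Instead I would write $\|F_j + J_j s_j\| - \bar f = B/(\|F_j + J_j s_j\| + \bar f)$ (the claim being trivial when the left-hand side is nonpositive) and bound the denominator below. When $\bar f>0$ the denominator is at least $2\bar f$, which cancels the factor $\bar f$ in the leading term of $B$ and restores the genuine $O(d^2)$ rate; when $\bar f=0$ one has instead $B=(L_3^2+\hat\mu L_2^2)d^4$ directly, so $\|F_j + J_j s_j\| = \sqrt{B} = O(d^2)$. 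In either case, absorbing the higher powers $d^3, d^4$ into $d^2$ via $d\le\tfrac{\delta}{2}$ produces a single constant $C_1$ depending only on $\bar f, \hat\mu, L_2, L_3, \delta$ and hence independent of $j$, which gives the result.
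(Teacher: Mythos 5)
Your proposal is correct and follows essentially the same route as the paper: both compare $m_j(s_j)$ against $m_j(\bar x_j - x_j)$, drop the regularization term on the left, and control $\|F_j + J_j(\bar x_j - x_j)\|$ via~\eqref{lipf_J} and $\gamma_j = \mu_j\|F_j\|^2$ via a bound on $\|F_j\|$ near $\bar f$. The only (immaterial) difference is the final algebraic step: the paper absorbs the resulting upper bound into a perfect square $\left(\left(2\hat\mu\bar f + L_3\right)\dist(x_j,X^*)^2 + \bar f\right)^2$ and takes square roots, whereas you divide the difference of squares by $\|F_j + J_j s_j\| + \bar f \ge 2\bar f$; both correctly recover the $O(\dist(x_j,X^*)^2)$ rate, and your explicit treatment of the $\bar f = 0$ case replaces the paper's citation of the zero-residual argument in~\cite{yamashita2001rate}.
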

\begin{proof}
Let us assume that $\bar f>0$. Otherwise the proof is the same as in~\cite[Lemma 2.1]{yamashita2001rate}.
Without loss of generality, since $f(x_j)$ is monotonically decreasing, we can consider that $j$ is sufficiently large
such that $\|F_j\|\le 2 \bar f$.
Hence, we get 
\begin{eqnarray*}
  \|J_js_j+F_j\|^2 &\le&  2 m_j(\bar x_j-x_j) 
  =  \|J_j(\bar x_j-x_j)+F_j\|^2 +\mu_j\|F_j\|^2\|x_j-\bar x_j\|^2 \\ 
    &\le& \left( \|{\bar F_j}\|  + \|J_j(\bar x_j-x_j)+F_j - {\bar F}_j \|\right)^2 +\mu_j\|F_j\|^2\|x_j-\bar x_j\|^2 \\ 
 & \le&   \left(\|{\bar F_j}\|+L_3\|x_j-\bar x_j\|^2\right)^2+4\mu_j\|\bar F_j\|^2\|x_j-\bar x_j\|^2 \\ 
  & =& L_3^2\|x_j-\bar x_j\|^4+2\|{\bar F}_j\|(2 \mu_j \|{\bar F}_j\|+L_3 )\|x_j-\bar x_j\|^2+\|{\bar F_j}\|^2 \\ 
 & \le &\left(\left(2 \hat \mu \bar f+L_3 \right)\|x_j-\bar x_j\|^2+\bar f\right)^2,
\end{eqnarray*}
which concludes the proof.

\end{proof}
\begin{lemma}\label{lem:distsoln} 
Suppose that Assumptions \ref{as:soln} and \ref{as:lip} are satisfied.\\
If $x_j\in N(x^*,\frac{\delta}{2})$, then the solution $s_j$ to~\eqref{eq:LMsubproblem} satisfies
\begin{equation}\label{eq:sdist}
\|s_j\| \le C_2 \dist(x_j,X^*),
\end{equation}
where $C_2$ is a positive constant independent of $j$.
\end{lemma}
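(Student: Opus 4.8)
The plan is to split the argument according to whether the limiting residual $\bar f$ is zero or strictly positive, since the two regimes call for genuinely different estimates. In both cases I would work with the exact minimizer $s_j = -(J_j^\top J_j + \gamma_j I)^{-1} J_j^\top F_j$ of \eqref{eq:LMsubproblem}, and I would use that $x_j\in N(x^*,\tfrac{\delta}{2})$ lies inside the neighborhoods of Assumptions~\ref{as:lip} and~\ref{as:errf} (taking $x_j$ sufficiently close to $X^*$, which holds eventually along the subsequence $\mathcal{K}$), so that all the local estimates are available. Throughout I write $\dist_j := \dist(x_j,X^*) = \|x_j-\bar x_j\|$.

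For the nonzero residual case ($\bar f>0$) I would exploit the closed form of $s_j$ directly. Since $J_j^\top J_j + \gamma_j I \succeq \gamma_j I$, one has $\|(J_j^\top J_j + \gamma_j I)^{-1}\|\le \gamma_j^{-1}$, hence $\|s_j\|\le \gamma_j^{-1}\|J_j^\top F_j\| = \gamma_j^{-1}\|\nabla f(x_j)\|$. The numerator is controlled by \eqref{lipgrad}, which gives $\|\nabla f(x_j)\|\le \sqrt{L_1}\,\dist_j$ precisely because $\nabla f(\bar x_j)=J(\bar x_j)^\top F(\bar x_j)=0$. It then remains to bound $\gamma_j=\mu_j\|F_j\|^2$ away from zero: using \eqref{lipf2} one has $\|F_j\|\ge \bar f - L_2\dist_j \ge \tfrac12\bar f$ once $x_j$ is close enough, so $\gamma_j\ge \tfrac14\mu_{\min}\bar f^2$. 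Combining the three facts yields $\|s_j\|\le \bigl(4\sqrt{L_1}/(\mu_{\min}\bar f^2)\bigr)\dist_j$, i.e. the claim.

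For the zero residual case ($\bar f=0$, so $\bar F_j=0$) the spectral bound above is useless because $\gamma_j$ now decays to zero, so instead I would compare model values, as in~\cite{yamashita2001rate}. Minimality of $s_j$ gives $m_j(s_j)\le m_j(\bar x_j-x_j)$, that is $\|F_j+J_js_j\|^2 + \gamma_j\|s_j\|^2 \le \|F_j+J_j(\bar x_j-x_j)\|^2 + \gamma_j\dist_j^2$. Dropping the nonnegative term $\|F_j+J_js_j\|^2$ and estimating $\|F_j+J_j(\bar x_j-x_j)\| = \|F_j+J_j(\bar x_j-x_j)-\bar F_j\|\le L_3\dist_j^2$ via \eqref{lipf_J}, one gets $\gamma_j\|s_j\|^2\le L_3^2\dist_j^4 + \gamma_j\dist_j^2$. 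Dividing by $\gamma_j$ and invoking the error bound \eqref{as:errf}, which here reads $\dist_j\le M\|F_j\|$ and hence $\gamma_j=\mu_j\|F_j\|^2\ge \mu_{\min}\dist_j^2/M^2$, turns the first term into $(L_3^2M^2/\mu_{\min})\dist_j^2$; this produces $\|s_j\|^2\le (L_3^2M^2/\mu_{\min}+1)\dist_j^2$, giving the claim with $C_2=\sqrt{L_3^2M^2/\mu_{\min}+1}$.

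The main obstacle is recognizing that no single estimate covers both regimes. The spectral bound $\|s_j\|\le\gamma_j^{-1}\|\nabla f(x_j)\|$ degenerates when $\bar f=0$, since there $\gamma_j\sim\dist_j^2$ and it would only give $\|s_j\|=O(1/\dist_j)$; conversely the model comparison degenerates when $\bar f>0$, since then one can lower-bound $\|F_j+J_js_j\|^2$ only up to an $O(\dist_j)$ error, which would yield merely $\|s_j\|=O(\sqrt{\dist_j})$. The split is therefore essential, and the delicate point in each case is producing the correct lower bound on $\gamma_j$ — the constant $\tfrac14\mu_{\min}\bar f^2$ when $\bar f>0$ versus the $\dist_j$-dependent bound $\mu_{\min}\dist_j^2/M^2$ supplied by the error bound when $\bar f=0$.
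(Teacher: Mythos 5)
Your proof is correct and follows essentially the same route as the paper: for $\bar f>0$ the paper uses exactly the bound $\|s_j\|\le\|\nabla f(x_j)\|/\gamma_j$ together with \eqref{lipgrad} and a lower bound on $\gamma_j=\mu_j\|F_j\|^2$ (it gets $\|F_j\|\ge\bar f$ from monotonicity of $f(x_j)$ rather than your Lipschitz estimate, which only changes the constant), and for $\bar f=0$ it simply cites \cite[Lemma 2.1]{yamashita2001rate}, whose argument is precisely the model-comparison you wrote out. The only remark worth making is that your zero-residual branch invokes Assumption~\ref{as:errf}, which is not among the lemma's stated hypotheses; the paper's cited proof relies on the same error bound, so this is a shared, not a new, dependence.
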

\begin{proof}
We assume that $\|\bar F_j\|=\bar f>0$, otherwise the proof is the same as in~\cite[Lemma 2.1]{yamashita2001rate}.
In this case, using the fact that $\|F_j\|\ge \bar f$ and $\mu_j \ge \mu_{\min}$, one has
$$
\|s_j\| \le \frac{\|\nabla f(x_j)\|}{\gamma_j}=\frac{\|\nabla f(x_j)\|}{\mu_j \|F_j\|^2} \le \frac{\sqrt{L_1}}{\mu_{\min} \bar f^2} \dist(x_j,X^*),
$$
which concludes the proof.

\end{proof}
\begin{lemma}\label{thm:stepaccept}
Suppose that Assumptions \ref{as:soln}, \ref{as:lip} and \ref{as:errf} are satisfied. Consider the case where $\bar f=0$, then for $j$ sufficiently large, one has $\rho_j\ge \eta$.
\end{lemma}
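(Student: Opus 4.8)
The plan is to show that the ratio $\rho_j$ tends to $1$ along the relevant subsequence, so that it eventually exceeds $\eta\in\,]0,1[$. Writing $x_+:=x_j+s_j$ and using $m_j(0)=f(x_j)$, I would first record the identity
\[
\rho_j-1=\frac{m_j(s_j)-f(x_+)}{m_j(0)-m_j(s_j)}
=\frac{\tfrac12\|F_j+J_js_j\|^2+\tfrac12\gamma_j\|s_j\|^2-\tfrac12\|F(x_+)\|^2}{m_j(0)-m_j(s_j)},
\]
and then argue that the numerator is $\mathcal{O}(\dist(x_j,X^*)^4)$ while the denominator is bounded below by a fixed positive multiple of $\dist(x_j,X^*)^2$.

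For the numerator, since $\bar f=0$ forces $\bar F_j=F(\bar x_j)=0$, Lemma~\ref{lem:distsubp} gives $\|F_j+J_js_j\|\le C_1\dist(x_j,X^*)^2$ and Lemma~\ref{lem:distsoln} gives $\|s_j\|\le C_2\dist(x_j,X^*)$. Combining the latter with~\eqref{lipf_J} yields $\|F(x_+)\|\le\|F_j+J_js_j\|+L_3\|s_j\|^2=\mathcal{O}(\dist(x_j,X^*)^2)$. Moreover, using $\bar F_j=0$ together with~\eqref{lipf2} gives $\gamma_j=\mu_j\|F_j\|^2\le\hat\mu L_2^2\dist(x_j,X^*)^2$, so that $\gamma_j\|s_j\|^2=\mathcal{O}(\dist(x_j,X^*)^4)$. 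Each of the three terms in the numerator is therefore $\mathcal{O}(\dist(x_j,X^*)^4)$.

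For the denominator, I would start from the exact expansion $m_j(0)-m_j(s_j)=\tfrac12\|F_j\|^2-\tfrac12\|F_j+J_js_j\|^2-\tfrac12\gamma_j\|s_j\|^2$. The last two terms were just bounded by $\mathcal{O}(\dist(x_j,X^*)^4)$, while the error bound of Assumption~\ref{as:errf} (again using $F(\bar x_j)=0$) gives $\|F_j\|\ge\tfrac{1}{M}\dist(x_j,X^*)$. Hence $m_j(0)-m_j(s_j)\ge\tfrac{1}{2M^2}\dist(x_j,X^*)^2-\mathcal{O}(\dist(x_j,X^*)^4)$, which is at least $\tfrac{1}{4M^2}\dist(x_j,X^*)^2$ once $\dist(x_j,X^*)$ is small enough, i.e.\ for $j$ sufficiently large along the subsequence $\mathcal{K}$ approaching $x^*$.

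Combining the two estimates gives $|\rho_j-1|=\mathcal{O}(\dist(x_j,X^*)^2)\to 0$, so $\rho_j\ge\eta$ eventually. The main obstacle is the denominator lower bound: the predicted decrease could a priori be as small as the fourth-order error terms, and it is precisely the error bound of Assumption~\ref{as:errf}—which in the zero-residual case forces $\|F_j\|$ to stay proportional to $\dist(x_j,X^*)$—that guarantees a genuine second-order predicted decrease and keeps the quotient going to zero. A minor accompanying point is to verify that $x_j\in N(x^*,\tfrac{\delta}{2})$ so that Lemmas~\ref{lem:distsubp} and~\ref{lem:distsoln} are applicable, which holds for $j$ large in $\mathcal{K}$.
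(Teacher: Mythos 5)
Your argument is correct and follows essentially the same route as the paper's proof: both lower-bound the predicted decrease by a fixed multiple of $\dist(x_j,X^*)^2$ via Assumption~\ref{as:errf} (using $\bar F_j=0$) together with Lemmas~\ref{lem:distsubp} and~\ref{lem:distsoln}, and both show the numerator $f(x_j+s_j)-m_j(s_j)$ is of higher order, so that $|1-\rho_j|\to 0$. The only (immaterial) difference is that you bound the squared norms directly to get an $\mathcal{O}(\dist(x_j,X^*)^4)$ numerator, whereas the paper factors the differences of squares and obtains $\mathcal{O}(\dist(x_j,X^*)^3)$; either suffices against the $\Theta(\dist(x_j,X^*)^2)$ denominator.
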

\begin{proof}
In fact, for $j$ sufficiently large, using Lemmas~\ref{lem:distsubp} and~\ref{lem:distsoln}, one gets

$
\begin{array}{l}
2 \left(m_j(0) - m_j(s_j)\right)  =   \|F_j\|^2-\|F_j+J_j s_j\|^2-\gamma_j\|s_j\|^2 \\
\ge  \left(\|F_j\|+\|F_j+J_js_j\|\right)\left(\frac{1}{M}\|x_j-\bar x_j\|-C_1 \|x_j-\bar x_j\|^2 \right)-C^2_2\gamma_j \|x_j-\bar x_j\|^2 \\
\ge \|F_j\|\left(\frac{1}{M}\|x_j-\bar x_j\|-C_1 \|x_j-\bar x_j\|^2 \right)-C^2_2\mu_j \|F_j\|^2\|x_j-\bar x_j\|^2 \\
\ge \frac{1}{M}\|x_j-\bar x_j\| \left(\frac{1}{M}\|x_j-\bar x_j\|-C_1 \|x_j-\bar x_j\|^2 \right)-L^2_2 C^2_2 {\hat \mu} \|x_j-\bar x_j\|^4 \\
= \frac{1}{M^2}\|x_j-\bar x_j\|^2- \frac{C_1}{M}\|x_j-\bar x_j\|^3-L^2_2 C^2_2 {\hat \mu} \|x_j-\bar x_j\|^4>0. 
\end{array}
$

On the other hand, using the fact that $\|J_j\|$ is bounded (by  $\kappa_J>0$), Lemma~\ref{lem:distsoln} and (since $\bar{F}_j=0$)
$\frac{1}{M} \|\bar x_j-x_j\| \le  \|F_j\| \le L_2\|\bar x_j-x_j\|$, one gets 

$
\begin{array}{l}
2\left|f(x_j+{s_j}) - m_j(s_j)\right|  =  \left|\|F(x_j+s_j)\|^2-\|F_j+J_j s_j\|^2-\gamma_j\|s_j\|^2\right|  \\
 = \left|\left(\|F(x_j+s_j)\|-\|F_j+J_j s_j\|\right)\left(\|F(x_j+s_j)\|+\|F_j+J_j s_j\|\right)-\gamma_j\|s_j\|^2\right|  \\
  \le  L_3 \|s_j\|^2\left(\|F(x_j+s_j)\|+\|F_j\|+\|J_j\|\|s_j\|\right)+ \gamma_j\|s_j\|^2
   \\
    \le  L_3 \|s_j\|^2\left( L_2 \|x_j+s_j-\bar x_j\|+ L_2 \|x_j-\bar x_j\|+ \|J_j\|\|s_j\|\right)+\gamma_j\|s_j\|^2\\
    \le (C_2 L_2+ 2L_2+C_2 \kappa_{J}) C_2^2L_3 \|x_j-\bar x_j\|^3+L_2^2 C_2^2\hat \mu \|x_j-\bar x_j\|^4.
\end{array}
$

Hence, for $j$ sufficiently large
\begin{eqnarray*}  
|1-\rho_j| &= & \left|\frac{m_j(0)-f(x_j) + f(x_j+{s_j}) - m_j(s_j)}{m_j(0)-m_j(s_j)} \right|\\ 
& \le &\frac{(C_2 L_2+ 2L_2+C_2 \kappa_{J}) C_2^2L_3 \|x_j-\bar x_j\|+L_2^2 C_2^2\hat \mu \|x_j-\bar x_j\|^2}{\frac{1}{M^2}- \frac{C_1}{M}\|x_j-\bar x_j\|-L^2_2 C^2_2 {\hat \mu} \|x_j-\bar x_j\|^2}. 
\end{eqnarray*}
Thus, $|1-\rho_j| \to 0$ as $j$ goes to $+\infty$.

\end{proof}
For the nonzero residual case, we must consider a specific instance of the Algorithm. In particular, we specify Step 3 of Algorithm~\ref{alg:LM}
to be,
\begin{center}
If $\rho_j\ge \eta$, then set $x_{j+1}=x_j+s_j$,  $\mu_{j+1}=\bar{\mu}$ and $\bar{\mu}=\mu_{j}$.  
\end{center}
Note that this is still consistent with the presentation of the Algorithm.
\begin{lemma}\label{thm:stepaccept:nnz}
Suppose that Assumptions \ref{as:soln}, \ref{as:lip} and \ref{as:errf} are satisfied. Consider that $\bar f>0$, then for $j$ sufficiently large, one has $\rho_j\ge \eta$.
\end{lemma}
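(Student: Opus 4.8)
The plan is to follow the structure of the zero-residual Lemma~\ref{thm:stepaccept}, controlling the model/actual discrepancy through
$$
1-\rho_j \;=\; \frac{f(x_j+s_j)-m_j(s_j)}{m_j(0)-m_j(s_j)} \;=:\; \frac{N_j}{D_j},
$$
and showing $|N_j|/D_j\le 1-\eta$ eventually. The essential difference from the zero-residual case is that this ratio will \emph{not} tend to $0$: writing $d_j:=\dist(x_j,X^*)=\|x_j-\bar x_j\|$, both $N_j$ and $D_j$ turn out to be of order $d_j^2$, and the ratio of their leading coefficients is $O(\bar f)$. The conclusion $\rho_j\ge\eta$ will therefore hold only under a smallness threshold on $\bar f$, which is exactly the ``small residual'' regime. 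Throughout I use that the specialized Step~3 pins $\mu_{j+1}=\bar\mu$, so that $\gamma_j=\mu_j\|F_j\|^2$ stays in a fixed band and may be treated as an $O(\bar f^2)$ quantity, and that (as in Lemma~\ref{thm:stepaccept}) eventually $\|F_j\|\le 2\bar f$ with $\|F_j\|\ge\bar f$.

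First I would lower bound the predicted reduction. Expanding $2D_j=\|F_j\|^2-\|F_j+J_js_j\|^2-\gamma_j\|s_j\|^2$, the new ingredient relative to $\bar f=0$ is a lower bound on $\|F_j\|^2-\bar f^2$. Here I exploit that $\bar x_j$ is stationary: writing $F_j-\bar F_j=J(\bar x_j)(x_j-\bar x_j)+r_j$ with $\|r_j\|\le L_3 d_j^2$ from~\eqref{lipf_J}, the first-order cross term satisfies $\bar F_j^\top J(\bar x_j)(x_j-\bar x_j)=\nabla f(\bar x_j)^\top(x_j-\bar x_j)=0$, so that
$$
\|F_j\|^2-\bar f^2 \;=\; \|F_j-\bar F_j\|^2+2\bar F_j^\top r_j \;\ge\; \tfrac{1}{M^2}d_j^2-2\bar f L_3\, d_j^2,
$$
where the error bound (Assumption~\ref{as:errf}) supplies $\|F_j-\bar F_j\|\ge d_j/M$. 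Combining with Lemma~\ref{lem:distsubp} to absorb $\|F_j+J_js_j\|\le\bar f+C_1 d_j^2$, and using the elementary identity $\gamma_j\|s_j\|^2\le 2D_j$ (valid for the exact, Cauchy and truncated-CG steps, since each is the minimizer of $m_j$ over a subspace, giving $2D_j=\|J_js_j\|^2+\gamma_j\|s_j\|^2$), I obtain $D_j\ge \tfrac{1}{4}c_0\, d_j^2$ with $c_0=\tfrac{1}{M^2}-2\bar f L_3>0$ once $\bar f$ is small.

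Next I would upper bound the discrepancy exactly as in Lemma~\ref{thm:stepaccept}, using~\eqref{lipf_J} and~\eqref{lipf1} together with Lemma~\ref{lem:distsoln}:
$$
2|N_j| \;\le\; \bigl(\|F(x_j+s_j)\|+\|F_j+J_js_j\|\bigr)\,L_3\|s_j\|^2+\gamma_j\|s_j\|^2.
$$
The decisive point is that the leading factor $\|F(x_j+s_j)\|+\|F_j+J_js_j\|$ is now of order $\bar f$ (rather than of order $d_j$ as when $\bar f=0$), so the dominant term is $O(\bar f\,\|s_j\|^2)$. Assuming a step bound of the form $\|s_j\|\le C\,d_j$, this gives $|N_j|\le \kappa\bar f\, d_j^2$ and hence $|1-\rho_j|=|N_j|/D_j\le 4\kappa\bar f/c_0$, so that $\rho_j\ge\eta$ for all sufficiently large $j\in\mathcal S+1$ provided $\bar f$ lies below the threshold making $4\kappa\bar f/c_0\le 1-\eta$.

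The main obstacle is precisely the step bound $\|s_j\|\le C\,d_j$ with $C$ independent of $\bar f$: the estimate of Lemma~\ref{lem:distsoln} is too lossy here, since its constant degrades like $\bar f^{-2}$ and would swamp the $\tfrac{1}{M^2}d_j^2$ term. The resolution is to use that $s_j=-(J_j^\top J_j+\gamma_j I)^{-1}\nabla f(x_j)$, with $\nabla f(x_j)=J_j^\top F_j\in\mathrm{range}(J_j^\top)=(\ker J_j)^\perp$; since $(J_j^\top J_j+\gamma_j I)$ preserves this subspace (and the Cauchy/CG Krylov subspaces lie inside it when initialized at $0$), the step never activates the rank-deficient direction, and $\|J_js_j\|\ge\sigma_\ast\|s_j\|$ for a smallest positive singular value $\sigma_\ast>0$ that the error bound keeps bounded away from $0$ near $X^*$. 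Equivalently, comparing $|N_j|$ directly against $2D_j=\|J_js_j\|^2+\gamma_j\|s_j\|^2\ge\sigma_\ast^2\|s_j\|^2$ and using $\gamma_j=O(\bar f^2)\ll\sigma_\ast^2$ yields $|1-\rho_j|\lesssim \bar f L_3/\sigma_\ast^2$. Verifying this rank-deficiency control and checking that the resulting small-residual threshold on $\bar f$ is compatible with $1-\eta$ is the delicate part of the argument.
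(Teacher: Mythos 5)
Your proposal attacks the wrong quantity: it tries to drive (or at least bound) $|1-\rho_j|$ by a Taylor/error-bound estimate, and as you yourself observe this can only yield $|1-\rho_j|\lesssim \bar f$, hence $\rho_j\ge\eta$ only when $\bar f$ is below a threshold depending on $\eta$, $M$, $L_3$, etc. But the lemma asserts eventual success for \emph{every} $\bar f>0$, with no smallness condition (the smallness condition on $\bar f$ enters the paper only later, in Lemma~\ref{lem:quaddist}, to get the linear rate). So the argument, even if all its estimates were tightened, proves a strictly weaker statement. Moreover the repair you propose for the step bound --- a uniform lower bound $\|J_js_j\|\ge\sigma_*\|s_j\|$ on the nonzero singular values of $J_j$ near $X^*$ --- is not implied by Assumption~\ref{as:errf}; the whole point of the error-bound framework is to avoid exactly that kind of rank/nonsingularity hypothesis, so this step introduces an assumption the paper does not make.

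The idea you are missing is that the nonzero-residual case does not need any accuracy estimate for the model at all: it is handled by the globalization machinery. With the specialized Step~3 ($\mu_{j+1}=\bar\mu$ on success, $\bar\mu=\mu_j$), the parameter $\bar\mu$ is monotonically nondecreasing, so if there were infinitely many unsuccessful iterations one would have $\mu_j\to\infty$. Since $\|F(x_j)\|\ge\bar f>0$ for all $j$, eventually $\mu_j>\kappa/\bar f^2\ge\kappa/\|F(x_j)\|^2$, and Lemma~\ref{lemma:wcc:condmusuccess} then forces the iteration to be successful --- a contradiction. Hence only finitely many iterations are unsuccessful and $\rho_j\ge\eta$ for all large $j$. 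Note how this argument uses $\bar f>0$ exactly where your argument is hurt by it: a positive residual floor makes the success threshold $\kappa/\|F(x_j)\|^2$ uniformly finite, which is why the specialized update rule (which you only used to keep $\gamma_j$ bounded) is introduced for this case in the first place.
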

\begin{proof}
Indeed, by the new updating mechanism, the parameter $\bar{\mu}$ is monotonically nondecreasing. In particular,
if there is an infinite set of unsuccessful steps, then ${\mu_j}\to \infty$. 
This implies that for some $j_0$ it holds that for $j\ge j_0$, $\mu_j > \frac{\kappa}{\bar{f}^2} > \frac{\kappa}{\|F(x_j)\|^2}$, which together
with Lemma~\ref{lemma:wcc:condmusuccess} reach a contradiction. Thus, there is a finite number of unsuccessful steps, and 
every step is accepted for $j$ sufficiently large. 

\end{proof}
\begin{proposition}\label{lem:dist}
Suppose that Assumptions~\ref{as:soln},~\ref{as:lip}, and~\ref{as:errf} are satisfied. Let $x_j$, $x_{j+1} \in N(x^*,\delta/2)$.
One has 
\begin{equation*}
\left (1-   \sqrt{m} L_1 M^{2} \bar f\right)\dist(x_{j+1},X^*)^2 \le  C_3^2 \dist(x_{j},X^*)^4 + {\hat C_3}^2 \bar f \dist(x_{j},X^*)^2,
\end{equation*}
where $C_3:= M \sqrt{C_1^2 + 2L_3C_1 C_2^2 + L_3^2C_2^4}$ and $\hat C_3:=M  \sqrt{ 2C_1 + 2L_3C_2^2}$.
\end{proposition}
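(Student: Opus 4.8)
The plan is to reduce the statement to an estimate on the residual gap at $x_{j+1}$ and then split that estimate into a ``model-accuracy'' part living at $x_j$ and a single delicate cross term living at $x_{j+1}$. Since $x_{j+1}\in N(x^*,\delta/2)\subseteq N(x^*,\delta_3)$, Assumption~\ref{as:errf} applies at $x_{j+1}$ and, upon squaring, gives $\dist(x_{j+1},X^*)^2\le M^2\|F(x_{j+1})-\bar F_{j+1}\|^2$, so it suffices to bound $\|F(x_{j+1})-\bar F_{j+1}\|^2$. Using $\|\bar F_{j+1}\|=\bar f$, I would expand
\[
\|F(x_{j+1})-\bar F_{j+1}\|^2=\bigl(\|F(x_{j+1})\|^2-\bar f^2\bigr)-2\,\bar F_{j+1}^\top\bigl(F(x_{j+1})-\bar F_{j+1}\bigr).
\]
The first parenthesis will produce the $\dist(x_j,X^*)^4$ and $\bar f\,\dist(x_j,X^*)^2$ terms on the right-hand side, while the cross term will produce the factor $1-\sqrt{m}L_1M^2\bar f$ on the left once it is transferred across.

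For the first parenthesis I would pass from $F(x_{j+1})=F(x_j+s_j)$ to the linearized residual through~\eqref{lipf_J}, namely $\|F(x_{j+1})-(F_j+J_js_j)\|\le L_3\|s_j\|^2$, and then combine Lemma~\ref{lem:distsubp} (which gives $\|F_j+J_js_j\|\le \bar f+C_1\dist(x_j,X^*)^2$) with Lemma~\ref{lem:distsoln} (which gives $\|s_j\|\le C_2\dist(x_j,X^*)$). This yields $\|F(x_{j+1})\|-\bar f\le (C_1+L_3C_2^2)\dist(x_j,X^*)^2$; multiplying this by $\|F(x_{j+1})\|+\bar f\le 2\bar f+(C_1+L_3C_2^2)\dist(x_j,X^*)^2$ gives
\[
\|F(x_{j+1})\|^2-\bar f^2\le (C_1+L_3C_2^2)^2\dist(x_j,X^*)^4+2\bar f\,(C_1+L_3C_2^2)\dist(x_j,X^*)^2.
\]
After the multiplication by $M^2$ coming from the error bound, the two summands are exactly $C_3^2\dist(x_j,X^*)^4$ and $\hat C_3^2\bar f\,\dist(x_j,X^*)^2$, since $C_3^2=M^2(C_1+L_3C_2^2)^2$ and $\hat C_3^2=2M^2(C_1+L_3C_2^2)$.

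The cross term is the crux and the step I expect to be hardest. The key observation is that $\bar x_{j+1}$ is a stationary point, so $J(\bar x_{j+1})^\top\bar F_{j+1}=\nabla f(\bar x_{j+1})=0$; hence the first-order part of $F(x_{j+1})-\bar F_{j+1}$, namely $J(\bar x_{j+1})(x_{j+1}-\bar x_{j+1})$, is orthogonal to $\bar F_{j+1}$ and drops out of the inner product. What remains is the contraction of $\bar F_{j+1}$ with the second-order Taylor remainder of $F$ at $\bar x_{j+1}$, which is of order $\dist(x_{j+1},X^*)^2$ by the twice continuous differentiability in Assumption~\ref{as:lip}; bounding $\|\bar F_{j+1}\|_1\le\sqrt{m}\,\bar f$ and absorbing the curvature constant into $L_1$ (the constant of~\eqref{lipgrad}) then produces $2\,\bigl|\bar F_{j+1}^\top(F(x_{j+1})-\bar F_{j+1})\bigr|\le \sqrt{m}L_1\bar f\,\dist(x_{j+1},X^*)^2$. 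Inserting the two estimates into the expansion, multiplying through by $M^2$, and transferring the cross-term contribution (which is proportional to $\dist(x_{j+1},X^*)^2$) to the left-hand side then yields the claimed inequality. The only genuinely nontrivial point is this cross-term bound: exploiting stationarity to annihilate the first-order term, and then carefully accounting for the dimensional factor $\sqrt{m}$ produced by the $\ell_1$--$\ell_2$ passage on $\bar F_{j+1}$; everything else is routine bookkeeping with the already-established Lemmas~\ref{lem:distsubp} and~\ref{lem:distsoln}.
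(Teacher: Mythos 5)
Your proposal is correct and follows essentially the same route as the paper's proof: reduce to $\|F(x_{j+1})-\bar F_{j+1}\|^2$ via Assumption~\ref{as:errf}, control the residual-norm part through~\eqref{lipf_J} together with Lemmas~\ref{lem:distsubp} and~\ref{lem:distsoln} (your difference-of-squares bookkeeping reproduces exactly the paper's constants, since $C_3=M(C_1+L_3C_2^2)$), and handle the cross term by a second-order Taylor expansion at $\bar x_{j+1}$ whose first-order part vanishes by stationarity, yielding the $\sqrt{m}L_1M^2\bar f\,\dist(x_{j+1},X^*)^2$ contribution that is moved to the left-hand side. The only cosmetic difference is that you group the terms as $(\|F(x_{j+1})\|^2-\bar f^2)-2\bar F_{j+1}^\top(F(x_{j+1})-\bar F_{j+1})$ whereas the paper expands $\|F(x_{j+1})\|^2-2F(x_{j+1})^\top\bar F_{j+1}+\bar f^2$ directly; these are algebraically identical.
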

\begin{proof}
Indeed, using Assumption~\ref{as:errf}, Lemma \ref{lem:distsubp}, the fact that
the step is accepted for $j$ sufficiently large, and $\bar f =\|{\bar F_{j+1}}\|$, one has

$
\begin{array}{l}
\|x_{j+1}-\bar x_{j+1}\|^2 \le M^{2}\|F(x_j+s_j)-{\bar F}_{j+1}\|^2 \\
 \le M^{2} \left( \|F(x_j+s_j)\|^2-2F(x_j+s_j)^{\top} {\bar F}_{j+1}+\bar f^2\right)\\
  \le  M^{2} \left( \left(\|J(x_{j})s_{j}+F_j\|+L_3 \|s_{j}\|^2\right)^2 -2F(x_j+s_j)^{\top} {\bar F}_{j+1}+\bar f^2\right)\\
  \le  M^{2} \left( \left \|J(x_{j})s_{j}+F_j \right\|^2+2L_3\|J(x_{j})s_{j}+F_j\|\|s_{j}\|^2 +L^2_3 \|s_{j}\|^4 \right. \\
  \left. \quad -2F\left(x_j+s_j\right)^{\top} {\bar F}_{j+1}+\bar f^2 \right)\\
   \le M^{2} \left(  C_1^2\left \|x_j-\bar x_j\right \|^4+2C_1\|x_j-\bar x_j\|^2\bar f+\bar f ^2+2L_3C_1\|x_j-\bar x_j\|^2\|s_{j}\|^2\right.\\ 
   \left. \quad+ 2L_3\bar f \|s_{j}\|^2 +L^2_3 \|s_{j}\|^4 -2F\left(x_j+s_j\right)^{\top} {\bar F}_{j+1}+\bar f^2 \right).
\end{array}
$

Therefore, using Lemma  \ref{lem:distsoln}, one gets
{\small{
\begin{eqnarray}
\label{eq:conv}   
\|x_{j+1}-\bar x_{j+1}\|^2 \le  C_3^2 \|x_j-\bar x_j\|^4 + {\hat C_3}^2 \bar f \|x_j-\bar x_j\|^2 + 2 M^{2} |F(x_j+s_j)^{\top} {\bar F}_{j+1}-\bar f^2|,
\end{eqnarray}
}}
where  $C_3 := M \sqrt{C_1^2 + 2L_3C_1 C_2^2 + L_3^2 C_2^4}$ and $ \hat C_3:=M  \sqrt{ 2C_1 + 2L_3C_2^2}$ are positive constants.
Moreover, by applying Taylor expansion to $x \rightarrow F(x)^{\top} {\bar F}_{j+1}$ at the point $x_{j+1}=x_j+s_j$ around $\bar x_{j+1}$, there exists $R>0$ such that
 \begin{eqnarray*}
 \label{eq:hess}
 |F_{j+1}^{\top} {\bar F}_{j+1} -  \bar f ^2 | &\le &|{(J(\bar x_{j+1})^{\top}\bar F_{j+1})}^{\top} (x_{j+1}-\bar x_{j+1})|+ R  \|x_{j+1}-\bar x_{j+1}\|^2 \\
 &=& |{\nabla f(\bar x_{j+1})}^{\top} (x_{j+1}-\bar x_{j+1})| + R  \|x_{j+1}-\bar x_{j+1}\|^2 \\
 &=& R  \|x_{j+1}-\bar x_{j+1}\|^2 ~~~~\mbox{by using $\bar x_{j+1} \in X^*$}.
\end{eqnarray*} 
Note that the Hessian of $x \rightarrow F(x)^{\top} {\bar F}_{j+1}$ is equal to $\sum_{i=1}^m \bar F(x_{j+1})_i \nabla^2 F_i(x)$, and
 from Assumption \ref{as:lip} we have $\nabla^2 F_i(x)$ are bounded. Hence, the constant $R$ is bounded as follows
$R \le \frac{L_1}{2} \sum_{i=1}^m |(\bar  F_{j+1})_i| \le  \frac{\sqrt{m} L_1}{2} \|{\bar F}_{j+1}\|. $
Combining the obtained Taylor expansion and (\ref{eq:conv}) gives
$$
\|x_{j+1}-\bar x_{j+1}\|^2 \le  C_3^2 \|x_j-\bar x_j\|^4 + {\hat C_3}^2 \bar f  \|x_j-\bar x_j\|^2  +   \sqrt{m} L_1 M^{2} \bar f   \|x_{j+1}-\bar x_{j+1}\|^2,
$$
which completes this proof.

\end{proof}
In next lemma, we show that, once the iterates $\{x_j\}_j$ lie sufficiently near their limit point $x^*$, the sequence $\{\dist(x_{j},X^*)\}_j$ converges to zero quadratically if the problem has a zero residual, or linearly when the residual is small.
\begin{lemma}\label{lem:quaddist}
Suppose that Assumptions~\ref{as:soln},~\ref{as:lip}, and~\ref{as:errf} are satisfied. Let $\{x_j\}_j$ be a sequence generated by the proposed Algorithm. Suppose that both $x_j$ and $x_{j+1}$ belong to $N(x^*,\delta/2)$. If the problem has a zero residual, i.e., $\bar f=0$, then
\begin{equation} \label{eq:quaddist}
\dist(x_{j+1},X^*)\le C_3\dist(x_{j},X^*)^2,
\end{equation}
where $C_3$ is a constant defined according to Proposition \ref{lem:dist}.

Otherwise, if
$\bar f < \min\left\{\frac{1}{\sqrt{m} L_1 M^{2}},\frac{1-C^2_3\delta^2}{{\hat C_3}^2 + \sqrt{m} L_1 M^{2}}\right\},$ then 
\begin{equation} \label{eq:lindist}
\dist(x_{j+1},X^*)\le C_4\dist(x_{j},X^*),
\end{equation}
where $C_4\in ]0,1[$ is a positive constant independent of $j$.
\end{lemma}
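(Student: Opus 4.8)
The plan is to derive both rates directly from Proposition~\ref{lem:dist}, which already packages the hard analytic work (the Taylor expansion and the error-bound estimates) into the single recursion
\[
\left(1 - \sqrt{m}L_1 M^2 \bar f\right)\dist(x_{j+1},X^*)^2 \le C_3^2 \dist(x_j,X^*)^4 + \hat C_3^2 \bar f\, \dist(x_j,X^*)^2 .
\]
Everything that remains is elementary algebra on this inequality. For the zero-residual case I would simply set $\bar f = 0$: the cross term vanishes and the prefactor on the left collapses to $1$, leaving $\dist(x_{j+1},X^*)^2 \le C_3^2 \dist(x_j,X^*)^4$. Taking square roots gives \eqref{eq:quaddist} immediately, with the very same constant $C_3$ from Proposition~\ref{lem:dist}.

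For the nonzero small-residual case, the first observation is that the smallness condition $\bar f < 1/(\sqrt{m}L_1 M^2)$ forces the coefficient $\alpha := 1 - \sqrt{m}L_1 M^2 \bar f$ on the left to be strictly positive, so the inequality of Proposition~\ref{lem:dist} may be divided through by $\alpha$, yielding
\[
\dist(x_{j+1},X^*)^2 \le \frac{C_3^2 \dist(x_j,X^*)^2 + \hat C_3^2 \bar f}{\alpha}\,\dist(x_j,X^*)^2 .
\]
The bracketed factor still depends on $j$ through $\dist(x_j,X^*)$, so to obtain a contraction constant independent of $j$ I would invoke the hypothesis $x_j \in N(x^*,\delta/2)$: since $x^* \in X^*$, this gives $\dist(x_j,X^*) \le \|x_j - x^*\| \le \delta/2 \le \delta$, hence $\dist(x_j,X^*)^2 \le \delta^2$. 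Replacing $\dist(x_j,X^*)^2$ by $\delta^2$ in the numerator only enlarges it, which suggests defining
\[
C_4 := \sqrt{\frac{C_3^2 \delta^2 + \hat C_3^2 \bar f}{1 - \sqrt{m}L_1 M^2 \bar f}},
\]
a quantity no longer depending on $j$, and delivering $\dist(x_{j+1},X^*) \le C_4 \dist(x_j,X^*)$.

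It then remains only to check that $C_4 \in\,]0,1[$. Positivity is clear, as numerator and denominator are both positive. The bound $C_4 < 1$ is equivalent to $C_3^2 \delta^2 + \hat C_3^2 \bar f < 1 - \sqrt{m}L_1 M^2 \bar f$, i.e.\ to $(\hat C_3^2 + \sqrt{m}L_1 M^2)\,\bar f < 1 - C_3^2 \delta^2$, which is exactly the second term in the stated minimum governing $\bar f$ (and which implicitly requires $C_3\delta < 1$, since otherwise the admissible range for $\bar f$ would be empty). I expect this final verification to be the only delicate point: the entire content of the nonzero-residual statement is the trade-off that the linear-in-$\dist^2$ term $\hat C_3^2 \bar f\,\dist(x_j,X^*)^2$ destroys the pure quadratic contraction available when $\bar f = 0$, and one must confirm that once $\bar f$ is small enough the combined coefficient is still a genuine contraction strictly below $1$, uniformly in $j$. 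Care is needed only in tracking the directions of the inequalities when replacing $\dist(x_j,X^*)^2$ by its uniform bound $\delta^2$, and in ensuring the resulting $C_4$ retains no dependence on the iteration index.
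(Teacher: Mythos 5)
Your proposal is correct and follows essentially the same route as the paper: both read the two rates off the recursion in Proposition~\ref{lem:dist}, setting $\bar f=0$ for the quadratic case and, for the small-residual case, dividing by $1-\sqrt{m}L_1M^2\bar f$, bounding $\dist(x_j,X^*)^2$ by $\delta^2$, and defining the identical contraction constant $C_4$. Your added observation that the admissible range for $\bar f$ implicitly requires $C_3\delta<1$ is a fair point the paper leaves unstated, but it does not change the argument.
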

\begin{proof}
Indeed, under the zero residual case, i.e., $\bar f=0$, then Proposition \ref{lem:dist} is equivalent to
$
\dist(x_{j+1},X^*) \le  C_3 \dist(x_{j},X^*)^2.$

If the problem has a small residual, i.e., $\bar f < \min\left\{\frac{1}{\sqrt{m} L_1 M^{2}},\frac{1-C^2_3\delta^2}{{\hat C_3}^2 + \sqrt{m} L_1 M^{2}}\right\},$
then Proposition \ref{lem:dist} will be equivalent to
 $$
\dist(x_{j+1},X^*)^2 \le  \frac{C_3^2 \delta^2  + {\hat C_3}^2 \bar f }{1 - \sqrt{m} L_1 M^{2}  \bar f} \dist(x_{j},X^*)^2 = C^2_4\dist(x_{j},X^*)^2,$$
where $C_4:=\sqrt{ \frac{C_3^2 \delta^2  + {\hat C_3}^2 \bar f  }{1 - \sqrt{m} L_1 M^{2}  \bar f }} \in ]0,1[$.

\end{proof}
The final theorem is standard (see, e.g.,~\cite[Lemma 2.3]{yamashita2001rate}). 
\begin{theorem}\label{th:quadconv}
Suppose that  Assumptions~\ref{as:soln},~\ref{as:lip}, and~\ref{as:errf} are satisfied.
If $\bar f=0$ then Algorithm~\ref{alg:LM} converges locally quadratically to $X^*$.
Otherwise, if the problem has a small nonzero residual as in Lemma \ref{lem:quaddist}, 
Algorithm~\ref{alg:LM} converges locally at a linear rate to $X^*$.
\end{theorem}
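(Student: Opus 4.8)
The engine of the argument is the one-step contraction established in Lemma~\ref{lem:quaddist}: it yields $\dist(x_{j+1},X^*)\le C_3\dist(x_j,X^*)^2$ in the zero-residual case and $\dist(x_{j+1},X^*)\le C_4\dist(x_j,X^*)$ with $C_4\in\,]0,1[$ in the small-residual case. The crucial caveat is that this estimate is valid only when \emph{both} $x_j$ and $x_{j+1}$ lie in $N(x^*,\delta/2)$. Hence the whole theorem reduces to a trapping argument: I would show that once a single iterate enters a sufficiently small ball $N(x^*,r)$ with $r<\delta/2$, every subsequent iterate remains in $N(x^*,\delta/2)$, so that Lemma~\ref{lem:quaddist} applies at every step and can be iterated to produce the stated rate. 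Note that by Lemmas~\ref{thm:stepaccept} and~\ref{thm:stepaccept:nnz} all steps are eventually accepted, so $x_{j+1}=x_j+s_j$ for $j$ large, which is what lets the recurrence act along the true trajectory of the algorithm.

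First I would fix the trapping radius. Using Assumption~\ref{as:soln} (the accumulation point $x^*\in X^*$) together with the global convergence of Theorem~\ref{th:gcv:lim}, there is an index $j_0$, as large as desired, with $x_{j_0}\in N(x^*,r)$; in particular $\dist(x_{j_0},X^*)\le\|x_{j_0}-x^*\|\le r$. I would then argue by induction on $j\ge j_0$ that $x_j\in N(x^*,\delta/2)$ and that $\dist(x_j,X^*)$ decays at the claimed rate. The inductive step combines the triangle inequality $\|x_{j+1}-x^*\|\le\|x_{j_0}-x^*\|+\sum_{k=j_0}^{j}\|s_k\|$ with the step bound $\|s_k\|\le C_2\dist(x_k,X^*)$ from Lemma~\ref{lem:distsoln} and the contraction of Lemma~\ref{lem:quaddist}. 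In the linear case the distances are dominated by the geometric sequence $C_4^{\,k-j_0}r$, so $\sum_{k\ge j_0}\|s_k\|\le C_2 r/(1-C_4)$, and choosing $r\le(\delta/2)\,[\,1+C_2/(1-C_4)\,]^{-1}$ guarantees $\|x_{j+1}-x^*\|\le\delta/2$. In the quadratic case, taking $r$ small enough that $C_3 r\le 1/2$ makes $C_3\dist(x_k,X^*)\le(C_3 r)^{2^{k-j_0}}$, a super-geometrically summable sequence, so the same type of bound on $\sum_{k\ge j_0}\|s_k\|$ holds after possibly shrinking $r$ further. Either way the iterates never escape $N(x^*,\delta/2)$.

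With the trapping secured, the conclusion is immediate: unrolling the recurrence of Lemma~\ref{lem:quaddist} from $j_0$ onward gives $\dist(x_j,X^*)\to 0$ at a quadratic rate when $\bar f=0$ and at a linear rate when $\bar f$ satisfies the smallness condition of Lemma~\ref{lem:quaddist}, which is exactly the assertion of the theorem. I expect the main obstacle to be precisely the induction that keeps the iterates inside $N(x^*,\delta/2)$: the contraction estimate and the step bound are each only conditional on membership in the ball, so one must choose $r$ quantitatively so that the accumulated displacement $\sum_{k\ge j_0}\|s_k\|$, together with the initial distance $r$, stays strictly below $\delta/2$ for \emph{all} $j$ simultaneously. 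Once this is arranged, everything else follows directly from the lemmas already proved.
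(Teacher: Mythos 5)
Your proposal is correct and is essentially the argument the paper itself invokes: the paper gives no explicit proof, deferring to the standard trapping argument of~\cite[Lemma~2.3]{yamashita2001rate}, which is exactly what you reconstruct (choose $r$ so that $r + C_2\sum_k \dist(x_k,X^*) \le \delta/2$, establish membership of $x_{j+1}$ in the ball via the step bound of Lemma~\ref{lem:distsoln} \emph{before} applying the contraction of Lemma~\ref{lem:quaddist}, then iterate). Your ordering of the induction and the use of Lemmas~\ref{thm:stepaccept} and~\ref{thm:stepaccept:nnz} to ensure the recurrence tracks the actual iterates are both the right details to supply.
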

\section{Numerical Results}
\label{sec:num}
In this section, we report the results of some preliminary experiments performed to test the practical behavior of Algorithm~\ref{alg:LM}. All procedures were implemented in
Matlab and run using Matlab 2019a on a MacBook Pro 2,4 GHz Intel Core i5, 4 GB RAM; the machine precision is $\epsilon_m \sim2\cdot 10^{-16}$.

We will compare our proposed algorithm with the LM method proposed in~\cite{RZhao_JFan_2016}. In fact, the latter algorithm can be seen 
to be similar to Algorithm~\ref{alg:LM} except that $\gamma_j := \mu_j \|\nabla f(x_j)\|$ where the parameter $\mu_j$ is updated in the following way. Given some constants $c_0>1$, $ c_1= c_0(c_0 +1)^{-1}$ and $0< \eta < \eta_1 < \eta_2<1$, if the iteration is
unsuccessful then $\mu_{j}$ is increased (i.e., $\mu_{j+1}:= c_0 \mu_{j}$). Otherwise, if $\|\nabla f(x_j)\| \mu_j < \eta_1$ then $\mu_{j+1}= c_0 \mu_j$, if $\|\nabla f(x_j)\| \mu_j>\eta_2$ then $\mu_{j+1}=\max\{c_1 \mu_j, \mu_{\min} \} $, and $\mu_j$ is kept unchanged otherwise.
The LM method proposed in~\cite{RZhao_JFan_2016} was shown to be globally convergent with a favorable complexity bound but its local behavior was not investigated.  In our comparison, we will refer to the implementation of this method as \texttt{LM-(global)}, while Algorithm~\ref{alg:LM} will be referred to as  \texttt{LM-(global and local)} (since it theoretically guarantees both global and local convergence properties). 
Both algorithms were written in Matlab and the subproblem was solved using the backslash operator. For the \texttt{LM-(global and local)} method, two variants were tested. In the first one, named \texttt{LM-(global and local)-V1}, we set the parameter $\mu_{j+1}$ equal to $\max\{{\bar \mu}/\lambda, \mu_{\min}\}$ if the iteration is declared successful. In the second variant, named \texttt{LM-(global and local)-V2}, the parameter is set $\mu_{j+1}=\bar \mu$ if the iteration $j$ is successful.
The initial parameters defining the implemented algorithms were set as:
$
\eta=10^{-2}, ~ \eta_1=0.1,  ~ \eta_2=0.9, ~\lambda= c_0=5, ~\mu_{0}=1$ and $\mu_{\min}=10^{-16}.  
$
As a set of problems $\mathcal{P}$, we used the well known $33$ Mor\'e/Garbow/Hillstrom problems \cite{More81b}. All the tested problems are smooth and have a least-squares structure. The residual function $F$ and  the Jacobian matrix for all the test problems~\cite{More81b} are implemented in Matlab. 
Some of these problems have a nonzero value at the optimum and thus are consistent with the scope of the paper. To obtain a larger test set, we created a set of additional $14$ optimization problems by
varying the problem dimension $n$ when this was possible.  For all the tested problems, we used the proposed starting points $x_0$ given in the problems' original presentation~\cite{More81b}.
All algorithms are stopped when
$
\|\nabla f(x_j)\|\le \epsilon$ where $\epsilon$ is the regarded accuracy level.
If they did not converge within a maximum number of iterations $j_{\max}:=10000$, then they were considered to have failed. 

For our test comparison, we used the performance profiles proposed by Dolan and Mor\'e~\cite{Dolan_2002} over the set of problems $\mathcal{P}$ (of cardinality $|\mathcal{P}|$).
For a set of algorithms $\mathcal{S}$, the performance profile
$\rho_s(\tau)$ of an algorithm~$s$ is defined as the fraction of problems
where the performance ratio $r_{p,s}$ is at most $\tau$,
$
 \rho_s(\tau) \; = \; \frac{1}{|\mathcal{P}|} \mbox{size} \{ p \in \mathcal{P}: r_{p,s} \leq \tau \}.$
The performance ratio $r_{p,s}$ is in turn defined by
$r_{p,s} \; = \; \frac{t_{p,s} }{\min\{t_{p,s}: s \in \mathcal{S}\}},$ 
where $t_{p,s} > 0$ measures the performance of the algorithm~$s$ when solving problem~$p$, seen here as the number of iterations. 
Better performance of the algorithm~$s$,
relatively to the other algorithms on the set of problems,
is indicated by higher values of $\rho_s(\tau)$.
In particular, efficiency is measured by $\rho_s(1)$ (the fraction of problems for which algorithm~$s$ performs the best) and robustness is measured by $\rho_s(\tau)$ for $\tau$ sufficiently large
(the fraction of problems solved by~$s$). For a better visualization,
we plot the performance profiles in a $\log_2$-scale.

\begin{figure}[!ht]
\centering
\includegraphics[scale=0.45]{./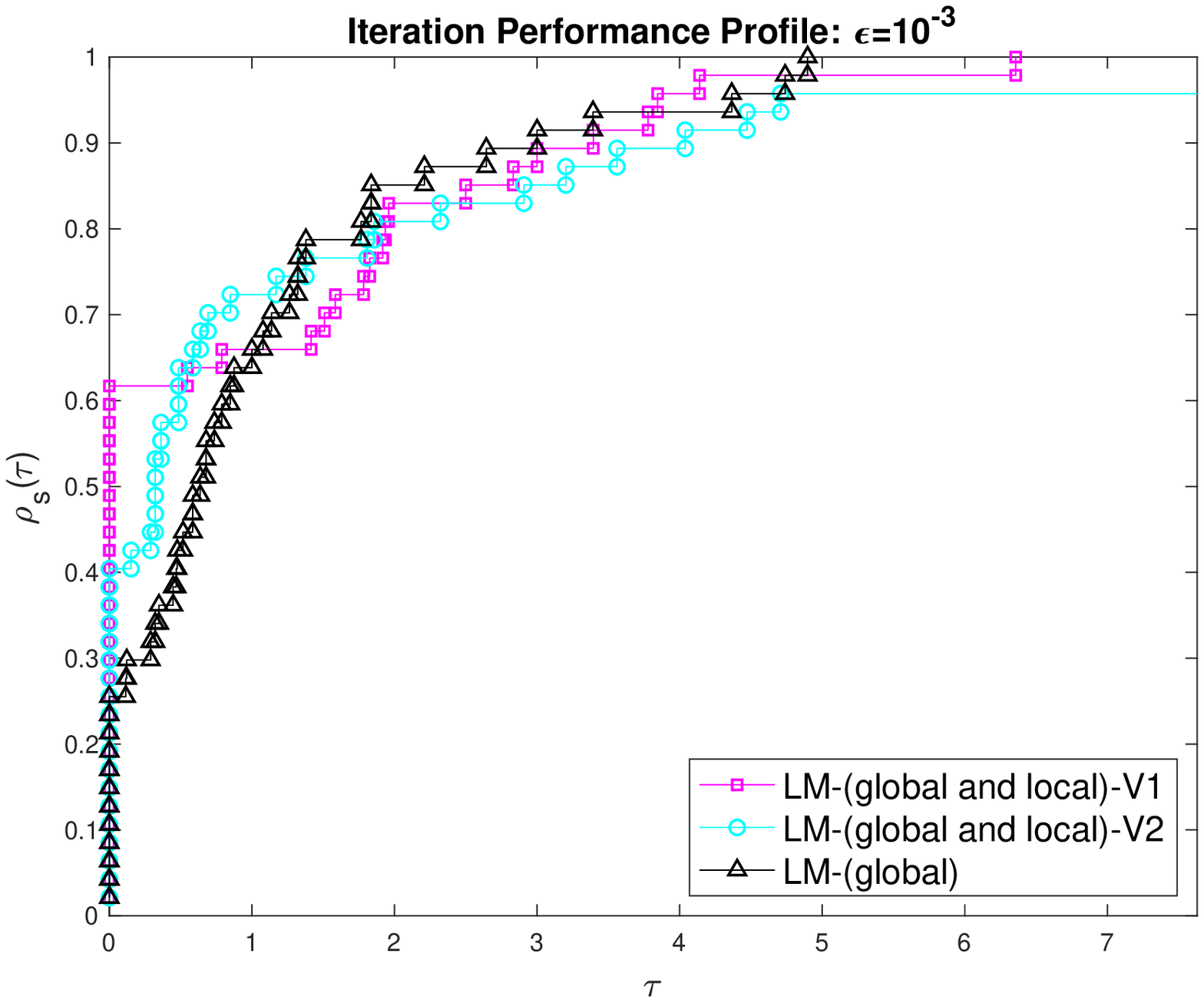} \label{subfig1:pp}
\includegraphics[scale=0.45]{./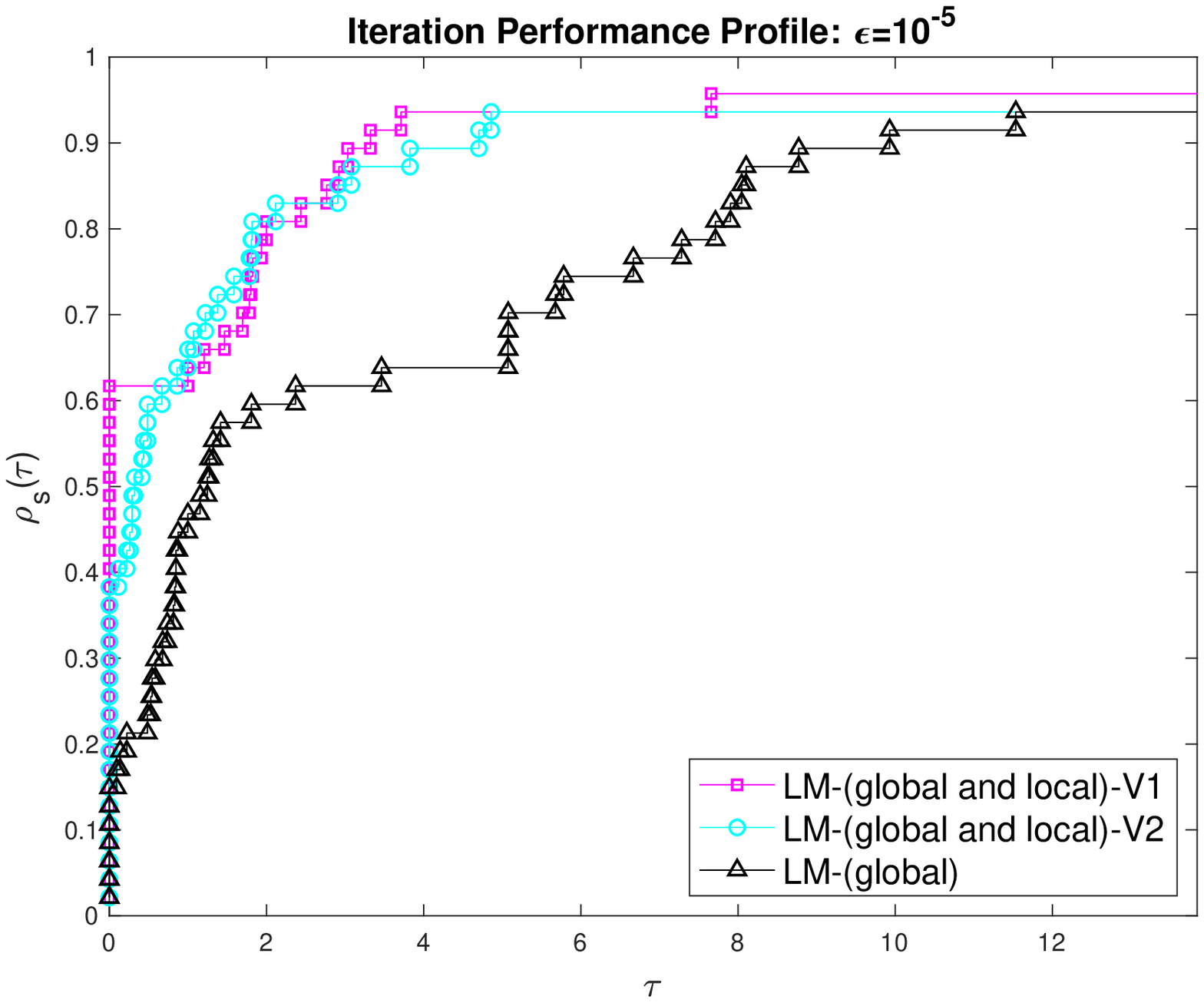}\label{subfig2:pp}
\caption{Obtained performance profiles considering the
two levels of accuracy, $10^{-3}$ and $10^{-5}$.} \label{fig:pp}
\end{figure}
We present the obtained performance profiles using two levels of accuracy in Figure \ref{fig:pp}. For a level of accuracy of $10^{-3}$,  \texttt{LM-(global and local)} variants present a better efficiency compared to \texttt{LM-(global)} (in more than $60 \%$ of the tested problems \texttt{LM-(global and local)-V1}  performed best, and  \texttt{LM-(global and local)-V2} performed better on  $40 \%$ while  \texttt{LM-(global)} was better on less than $30 \%$). When it comes to robustness, all the solvers exhibit good performance. Using a higher accuracy, the two variants of  \texttt{LM-(global and local)} outperform \texttt{LM-(global)}.  The \texttt{LM-(global and local)-V1} variant shows the best performance both in terms of efficiency and robustness.

In order to estimate the local convergence rate, we estimated the order of convergence  by
$$
  \mbox{\text{EOC}} := \log \left(\frac{\|\nabla f(x_{j_f})\|}{\max\{1,\|\nabla f(x_{0})\|\}} \right)/ \log \left(\frac{\|\nabla f(x_{j_f-1})\|}{\max\{1,\|\nabla f(x_{0})\|\}} \right),
$$
where $j_f$ is the index of the final computed iterate. When $ \mbox{\text{EOC}} \ge 1.8$,  the algorithm will be said quadratically convergent. If $ 1.8 >\mbox{\text{EOC}} \ge 1.1$, then  the algorithm will  be seen as superlinearly convergent. Otherwise, the algorithm is  linearly convergent or worse. 
\begin{table}[!h]
\centering
\caption{Order of convergence for problems in $\mathcal{P}$ with the accuracy level $\epsilon=10^{-5}$.}
\label{table:eoc:P1}
\begin{tabular}{|c|l|c|c|c|}
\hline
\multirow{2}{*}{$\mathcal{P}$ }  & \multirow{2}{*}{Method} & \multicolumn{3}{c|}{Number of problems to converge}  \\  \cline{3-5} 
 &                                       &  Linear or worse    &    Superlinear &  Quadratic    \\ \hline \hline
zero  & \multicolumn{1}{|l|}{\texttt{LM-(global and local)-V1}  }           &   2    & 8           &    18  \\ \cline{2-5} 
residual  & \multicolumn{1}{|l|}{\texttt{LM-(global and local)-V2}  }           &   4    &     9       &   15   \\ \cline{2-5} 
  & \multicolumn{1}{|l|}{\texttt{LM-(global)}}  &  18     &      9 &    1 \\ \hline \hline
 nonzero &\multicolumn{1}{|l|}{\texttt{LM-(global and local)-V1}  }           &   7    & 7           &    5  \\ \cline{2-5} 
 residual  & \multicolumn{1}{|l|}{\texttt{LM-(global and local)-V2}  }          &      10 &      8      &  1    \\ \cline{2-5} 
 & \multicolumn{1}{|l|}{\texttt{LM-(global)}}  &  12     &      5 &    2  \\ \hline
\end{tabular}
\end{table}
The estimation of the order of convergence (see Table \ref{table:eoc:P1}) shows the good local behavior of the \texttt{LM-(global and local)} variants compared to \texttt{LM-(global)}. 
In fact, \texttt{LM-(global and local)} variants converged quadratically or superlinearly on $38$ (\texttt{v1}) and $33$ (\texttt{v2}) problems respectively, while \texttt{LM-(global)} showed quadratic or superlinear convergence for only 17 problems.

\section{Conclusions}
\label{sec:conclusion}
In this paper, we presented and analyzed a novel LM method for solving nonlinear least-squares problems. 
We were able to formulate a globally convergent LM method with strong worst-case iteration complexity bounds. 
The proposed method is locally convergent at quadratic rate for zero residual problems and at a linear rate for small residuals. 
Preliminary numerical results confirmed the theoretical behavior. 
Future research can include problems with constraints as well as those with noisy data.

\section*{Acknowledgements}
\small{
We would like to thank Cl\'{e}ment Royer and the referees for their careful readings and corrections that helped us to improve our manuscript significantly. Support for Vyacheslav Kungurtsev was provided by the OP VVV project
CZ.02.1.01/0.0/0.0/16\_019/0000765 ``Research Center for Informatics''.}
\bibliographystyle{spmpsci_unsrt}
\addcontentsline{toc}{chapter}{Bibliography}


\end{document}